\documentclass[12pt]{article}

\usepackage[margin=1in]{geometry}
\usepackage[T1]{fontenc}
\usepackage{amsmath}
\usepackage{amsthm}
\usepackage{newtxtext}
\usepackage[subscriptcorrection]{newtxmath}
\usepackage{natbib}
\usepackage{xcolor}
\usepackage{subcaption}
\usepackage{amsfonts,dsfont,mathrsfs}
\usepackage{booktabs,multirow,array}
\usepackage{bm}
\usepackage{graphicx}
\usepackage{bbm}
\usepackage{comment}
\usepackage{url}
\usepackage{xparse}
\usepackage{mathtools}
\usepackage{cleveref}
\usepackage[plain,noend]{algorithm2e}

\makeatletter
\renewcommand{\algocf@captiontext}[2]{#1\algocf@typo. \AlCapFnt{}#2}

\def\@algocf@capt@plain{top}
\renewcommand{\algocf@makecaption}[2]{%
  \addtolength{\hsize}{\algomargin}%
  \sbox\@tempboxa{\algocf@captiontext{#1}{#2}}%
  \ifdim\wd\@tempboxa >\hsize%
    \hskip .5\algomargin%
    \parbox[t]{\hsize}{\algocf@captiontext{#1}{#2}}%
  \else%
    \global\@minipagefalse%
    \hbox to\hsize{\box\@tempboxa}%
  \fi%
  \addtolength{\hsize}{-\algomargin}%
}
\makeatother

\theoremstyle{plain}
\newtheorem{theorem}{Theorem}
\newtheorem{lemma}{Lemma}

\newtheorem{corollary}{Corollary}
\theoremstyle{remark}
\newtheorem{remark}{Remark}

\newenvironment{keywords}
  {\par\medskip\noindent\textbf{Keywords:}\ }
  {\par\medskip}

\NewDocumentCommand{\evalat}{sO{\big}mm}{
  \IfBooleanTF{#1}
   {\mleft. #3 \mright|_{#4}}
   {#3#2|_{#4}}}

\newcommand{\E}{E}

\renewcommand{\mid}{\ensuremath{\,|\,}}

\usepackage{authblk}
\date{}
\author[1]{Filippo Ascolani}
\author[2]{Mario Beraha}
\author[3]{Stefano Favaro}

\affil[1]{Department of Statistical Science, Duke University}
\affil[2]{Department of Economics, Management, and Statistics, University of Milano--Bicocca}
\affil[3]{Department of Economics and Statistics, University of Torino and Collegio Carlo Alberto}

\title{Asymptotic regimes for maximum likelihood estimation in the Ewens--Pitman model: When the strength parameter matters}

\begin{document}

\maketitle

\begin{abstract}
We study the large sample asymptotic behaviour of the Maximum Likelihood Estimator of the discount and strength parameters $(\alpha,\theta)$ in the Ewens--Pitman model for random partitions, under mild assumptions on the data-generating mechanism. We show that four distinct regimes arise, depending on the limiting behaviour of the frequency spectrum. In particular, in contrast with previous work, we find that $\theta$ may play a crucial role asymptotically. We further show that the existing literature implicitly focuses on only two of these regimes, and we relate this restriction to the constraints imposed by infinite exchangeability. Under the latter, indeed, the number of distinct blocks and the frequency spectrum are necessarily tied by a rigid structural relation. We prove that this lack of flexibility can be overcome through what we call the scaled Ewens--Pitman model, in which $\theta$ is allowed to grow with the sample size $n$. Finally, we provide empirical evidence from real-world data showing that such extensions are needed to capture frequency spectra that fall outside the classical Ewens--Pitman framework.
\end{abstract}

\begin{keywords}
Random partitions; Pitman-Yor process; Bayesian nonparametrics; misspecified models.
\end{keywords}

\section{Introduction} \label{sec:intro}
\subsection{Background and motivations}

The Ewens--Pitman (EP) model is a canonical model for random partitions, with applications ranging from genomics \citep{lijoi2007discovering,lijoi2007est,favaro2009speciesvariety} to natural-language processing \citep{goldwater2006types,teh2006hpy}, network analysys \citep{crane2018edge} and forensics \citep{cereda2023learning}. It can be constructed as the marginalization over the space of partitions of a sample generated by a Pitman-Yor process \citep{pitmanyor1997,pitman2006combinatorial}, which occupies a central position in Bayesian nonparametrics \citep{ishwaranjames2003}. Indeed it allows a power-law behaviour in the ranked frequencies, which is observed in many scenarios of scientific interest \citep{clauset2009power}, and leads to a simple and tractable predictive structure. 
See \cite{pitman2006combinatorial} for a review and \eqref{eq:likelihood} for the induced likelihood on the space of partitions.

The EP model depends crucially on two parameters: the discount parameter $\alpha \in (0,1)$ and the strength (or concentration or scale) parameter $\theta > -\alpha$. 
A popular strategy to fix $(\alpha,\theta)$ consists in maximizing the induced likelihood, with an empirical Bayes flavour \citep{lijoi2007est,favaro2009speciesvariety, deblasi2015gibbstype}.
Despite its widespread use, only recent papers studied the asymptotic properties of the resulting estimators $(\hat \alpha_n, \hat \theta_n)$: \cite{koriyama2026asymptotic} provide the asymptotic distribution in the well-specified setting, while \cite{franssen2022empirical} focus on data generated independently from a distribution with regularly varying tails. In both cases it is shown that $\hat \alpha_n$ converges to the ``true" value $\alpha$, while $\hat \theta_n$ is inconsistent for $\theta$ or its distribution does not even concentrate: similar features are shown in \cite{balocchi2026bayesian}. We argue that those results arise from a deeper principle: if the data are generated from an infinitely exchangeable law, then the number of distinct blocks of the partitions and the frequency spectrum are rigidly connected, as formalized later in \eqref{eq:old_assumptions}; see \cite{schweinsberg2010number}. This implies that $\alpha$ alone is sufficient to describe the data generating mechanism, at least asymptotically.

\subsection{Preview of our contributions}

The starting point of the present paper is the empirical observation that, in real data, the Maximum Likelihood Estimator (MLE) often behaves quite differently than prescribed by the available theoretical works.  Figure~\ref{fig:theta-real} reports the value of $\hat \theta_n$ obtained from three large datasets and evaluated along increasing subsets of the sample: in all four cases the fitted values grow systematically as a function of the sample size $n$.
In \Cref{sec:pyp-mle} we give a theoretical justification of this behavior: under mild assumptions on the data generating mechanism, Theorem \ref{thm:mle} gives four different asymptotic regimes for the MLEs. In particular we show that the estimation of $\alpha$, at least asymptotically, depends only on the frequency-of-frequencies spectrum, whereas the asymptotic behavior of $\hat\theta_n$ is entirely driven by the mismatch between $\hat \alpha_n$ and the growth exponent of the number of blocks. Intuitively, this allows the model to break the rigid symmetry described above and to fit more general scenarios: this is empirically confirmed in Section \ref{sec:numeric}.

\begin{figure}
    \centering
    \includegraphics[width=\linewidth]{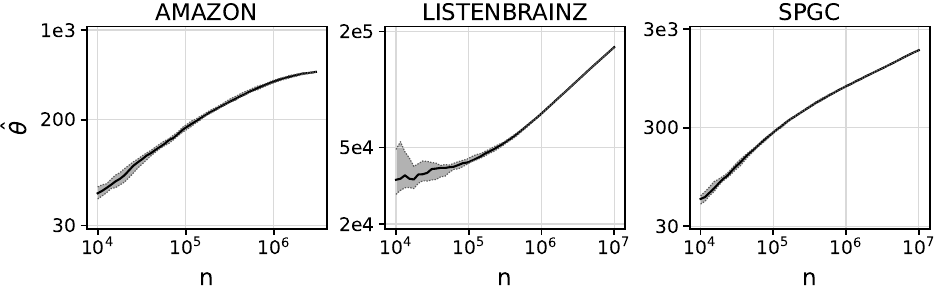}
    \caption{Estimated $\hat \theta_n$ (solid line) and $95\%$ pointwise confidence intervals (shaded area) as a function of the sample size in three different real datasets. See Section \ref{sec:numeric} for more details.}
    \label{fig:theta-real}
\end{figure}

In order to formalize this intuition, in Section \ref{sec:pyp-scaled} we introduce the scaled EP model, where $\theta$ is allowed to grow as $n^\beta$ with $\beta \in (0,1)$. Theorem \ref{thm:scaled-pyp} shows that the asymptotics of the number of blocks depends on both $\alpha$ and $\beta$, while the frequency-of-frequencies spectrum remains invariate: thus, in contrast with \eqref{eq:old_assumptions}, the behaviours of the latter two objects are decoupled and both parameters are relevant even asymptotically.

\section{Maximum likelihood estimation in the Ewens--Pitman model}\label{sec:pyp-mle}
\subsection{The asymptotic behaviour of the Maximum Likelihood Estimators}

Consider a sample $X_{1:n} = (X_1,\dots,X_n)$ featuring $K_n$ distinct values, with multiplicities (or block sizes) recorded in the vector $(N_{1,n},\dots,N_{K_n,n})$ where $N_{i, n}$ is the number of observations in $X_{1:n}$ with label $i$. Denote with $\{M_{r,n}\}_r$ the frequency spectrum, where
\[
M_{r,n}=\sum_{j=1}^{K_n}\mathbf 1\{N_{j,n}=r\}, \qquad r\geq1,
\]
is the number of blocks with size $r$. Assuming that $X_{1:n}$ is an exchangeable vector and only the induced partition is of interest, a common way to model the block sizes is given by the EP exchangeable partition probability function
\begin{equation}\label{eq:likelihood}
p_{\alpha,\theta}(n_1,\dots,n_k) = \frac{\prod_{i=1}^{k-1}(\theta+i\alpha)}{(\theta+1)_{n-1}} \prod_{j=1}^k(1-\alpha)_{n_j-1}, \qquad \alpha\in(0,1),\ \theta >-\alpha,
\end{equation}
where $(x)_m=\prod_{i=0}^{m-1}(x+i)$ denotes the rising factorial, $n_i > 0$ for $i = 1, \dots, k$ and $\sum_{i = 1}^kn_i = n$. This can be interpreted as a likelihood depending on parameters $(\alpha, \theta)$. When the discount and strength parameters $(\alpha, \theta)$ are fixed, e.g.\, estimated through Maximum Likelihood, it is possible to perform inference on many quantities of interests, such as the number of new blocks in additional samples; see \cite{lijoi2007discovering, favaro2009speciesvariety} and references therein.

In the following we use $\xrightarrow[]{\mathbb P}$ to denote convergence in probability as $n \to \infty$. Moreover, we recall that a function $\ell \, : \, \mathbb{R}_+ \, \to \, \mathbb{R}_+$ is slowly varying  if $\ell(a x)/\ell(x) \to 1$ for every $a > 0$ as $n \to \infty$ \citep{bingham1989regular}. We consider the following assumptions on the data-generating mechanism of $X_{1:n}$.
\begin{itemize}
    \item[\textbf{(A)}] There exist a random variable $C$, a constant $\gamma\in(0,1)$ and a slowly varying function $\ell$ such that
\[
\frac{K_n}{n^{\gamma}\ell(n)}\xrightarrow[]{\mathbb P} C, \quad  0 < C < \infty \text{ a. s.}.
\]
    \item[\textbf{(B)}] There exists a probability mass function $(p_r)_{r\geq1}$ such that $\sum_{r\geq2}p_r\log r<\infty$ and
\[
\frac{M_{r,n}}{K_n}\xrightarrow[]{\mathbb P} p_r, \qquad \log n\sum_{r\geq2}\left \lvert \frac{M_{r, n}}{K_n}-p_r \right\rvert\log r \xrightarrow[]{\mathbb P} 0.
\]
\end{itemize}
Assumption~(A) is a Heaps-type law for the number of distinct symbols.  Assumption~(B) requires that the frequency spectrum normalized by the total number of blocks, also called frequency-of-frequencies spectrum, converges suitably to a fixed probability distribution. These requirements are quite mild: for example, if $X_{1:n}$ are generated independently from a fixed probability distribution, then mild conditions on the latter imply convergence of $K_n$ and each $M_{r,n}/K_n$ as above \citep{karlin1967central, gnedin2007notes}. Define $\alpha^* \in (0, 1)$ as the unique solution to
    \begin{equation}\label{eq:fixed-point}
        \frac{1}{\alpha^*}=\sum_{r\ge2} p_r \sum_{i=1}^{r-1}\frac{1}{i-\alpha^*}.
\end{equation}
We write $c_n = o_p(1)$ if $c_n \xrightarrow[]{\mathbb P} 0$ and $c_n = O_p(1)$ if $(c_n)_n$ is bounded in probability.
Then the next theorem characterizes the asymptotic behaviour of the MLE of $(\alpha, \theta)$.
\begin{theorem}\label{thm:mle}
    Suppose assumptions (A) and (B) hold. If $(\hat \alpha_n, \hat \theta_n)$ maximizes the likelihood $p_{\alpha, \theta}\left(N_{1,n}, \dots, N_{K_n, n} \right)$ in \eqref{eq:likelihood}, then $\hat\alpha_n\xrightarrow[]{\mathbb P}\alpha^*$,
    with $\alpha^*$ as in \eqref{eq:fixed-point}. Moreover, writing $\beta=(\gamma-\alpha^*)/(1-\alpha^*)$, the following alternative regimes hold:
    \[
    \hat{\theta}_n =
    \begin{cases}
        n^\beta\{\alpha^*C\ell(n)\}^{\frac{1}{1-\alpha^*}} (1 + o_p(1)) \quad \text{if } \alpha^* < \gamma\\
        \\
        \{\gamma C\ell(n)\}^{\frac{1}{1-\gamma}}(1+o_p(1)) \quad \text{if } \alpha^* = \gamma \text{ and }\ell(n) \to \infty\\
        \\
        O_p(1) \quad \text{if } \alpha^* = \gamma \text{ and } \ell(n) = O(1)\\
        \\
        -\alpha^*+o_p(1) \quad \text{if } \alpha^* > \gamma
    \end{cases}
    \]
\end{theorem}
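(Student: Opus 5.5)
The plan is to work with the log-likelihood
\[
L_n(\alpha,\theta)=\sum_{i=1}^{K_n-1}\log(\theta+i\alpha)-\sum_{m=1}^{n-1}\log(\theta+m)+\sum_{r\ge2}M_{r,n}\sum_{i=1}^{r-1}\log(i-\alpha)
\]
and analyse its two score equations. Writing $\Psi_n(\alpha)=K_n^{-1}\sum_{r\ge2}M_{r,n}\sum_{i=1}^{r-1}(i-\alpha)^{-1}$, the equations read
\[
\frac{1}{K_n}\sum_{i=1}^{K_n-1}\frac{i}{\theta+i\alpha}=\Psi_n(\alpha),\qquad \sum_{i=1}^{K_n-1}\frac{1}{\theta+i\alpha}=\sum_{m=1}^{n-1}\frac{1}{\theta+m},
\]
the second being the $\theta$-equation. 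For fixed $\alpha$, I will first solve the $\theta$-equation (profiling) and then substitute the resulting $\hat\theta_n(\alpha)$ into the $\alpha$-equation. Because $\sum_{i=1}^{r-1}\log(i-\alpha)$ is concave in $\alpha$ and $\sum\log(\theta+i\alpha)-\sum\log(\theta+m)$ is well behaved in $\theta$, I expect uniqueness of the critical point. This should follow from a monotonicity argument on the profile equation $g_n(\theta;\alpha)=\sum_{i<K_n}(\theta+i\alpha)^{-1}-\sum_{m<n}(\theta+m)^{-1}$, which changes sign exactly once on $(-\alpha,\infty)$ when $K_n\ll n$. If $g_n(\theta;\alpha)<0$ for all $\theta$, the maximiser sits at the boundary $\theta=-\alpha$.

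\textbf{Step 1 (profile in $\theta$).} Using $\sum_{i<K}(\theta+i\alpha)^{-1}\approx \alpha^{-1}\log(1+K\alpha/\theta)$ and $\sum_{m<n}(\theta+m)^{-1}\approx\log(1+n/\theta)$ for large $\theta$, a solution $\theta\to\infty$ requires
\[
\frac{1}{\alpha}\log\frac{K_n\alpha}{\theta}\approx\log\frac{n}{\theta},\qquad\text{i.e.}\qquad \theta^{1-\alpha}\approx n\,(\alpha K_n)^{1/\alpha}\cdot n^{-1/\alpha}\ldots
\]
Written out carefully, this gives $\theta\approx (\alpha K_n)^{1/(1-\alpha)} n^{-\alpha/(1-\alpha)}$. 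With $K_n\sim Cn^\gamma\ell(n)$ this is $n^{(\gamma-\alpha)/(1-\alpha)}\{\alpha C\ell(n)\}^{1/(1-\alpha)}$, matching the first two regimes. When this quantity tends to $0$ (that is, $\alpha>\gamma$), no large root exists. The next-order terms (digamma expansions around $\theta$ of order one) then show $g_n<0$ on the whole range, or that the root approaches $-\alpha$. The approach is to compare $\sum_{i<K_n}(\theta+i\alpha)^{-1}\approx\alpha^{-1}\log K_n+O(1)$, which is $\sim(\gamma/\alpha)\log n<\log n$, with $\sum_{m<n}(\theta+m)^{-1}\approx\log n$. The divergence at $\theta=-\alpha$ of the $i=1$ term, $(\theta+\alpha)^{-1}$, then forces the root to be at distance $\approx 1/\{(1-\gamma/\alpha)\log n\}\to0$ from $-\alpha$. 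In the boundary case $\alpha=\gamma$ with $\ell$ bounded, both sides are $\log n+O(1)$, so the root stays tight, which gives $O_p(1)$.

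\textbf{Step 2 ($\alpha$-equation).} Substituting into the first equation, the left side is $K_n^{-1}\sum_{i<K_n} i/(\theta+i\alpha)$. It equals $\alpha^{-1}$ minus $\theta/(\alpha K_n)\sum(\theta+i\alpha)^{-1}$, which is $\alpha^{-1}-O(\theta\log n/K_n)$. In every regime $\hat\theta_n\log n/K_n\to0$, because $\theta\approx (K_n/n^{\alpha})^{1/(1-\alpha)}\ll K_n$. Hence the $\alpha$-equation becomes $\alpha^{-1}+o_p(1)=\Psi_n(\alpha)$. By Assumption (B), $\Psi_n(\alpha)\to\Psi(\alpha)=\sum_r p_r\sum_{i<r}(i-\alpha)^{-1}$ uniformly on compacts of $(0,1)$. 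The map $\alpha\mapsto\alpha^{-1}-\Psi(\alpha)$ is strictly decreasing with a unique zero $\alpha^*$ given by \eqref{eq:fixed-point}. A standard argmax/Z-estimator argument then gives $\hat\alpha_n\to\alpha^*$.

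\textbf{Step 3 (plug-in and main obstacle).} Because $\hat\theta_n$ depends on $\alpha$ through the exponent $n^{-\alpha/(1-\alpha)}$, plugging in $\hat\alpha_n$ requires $(\hat\alpha_n-\alpha^*)\log n\to0$. This is exactly why Assumption (B) carries the $\log n$-weighted condition. I would obtain this rate by showing that $\log n\,|\Psi_n-\Psi|$ is $o_p(1)$ uniformly near $\alpha^*$; this uses $\sum_{i<r}(i-\alpha)^{-1}\lesssim\log r$. It also requires the $o_p(1)$ error in the $\alpha$-equation to be $o_p(1/\log n)$. The main obstacle is this second error: in the regimes where $\theta$ is large, $\theta\log n/K_n$ is polynomially small, which is fine, but the joint solution must be controlled uniformly over $\alpha$ in a shrinking neighbourhood. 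I would handle this by proving monotonicity of the profile root in $\alpha$ and sandwiching it between $\alpha^*\pm\epsilon/\log n$. With the rate in hand, $n^{\beta(\hat\alpha_n)}=n^{\beta}(1+o_p(1))$, and continuity of $\{\alpha C\ell(n)\}^{1/(1-\alpha)}$ (for slowly varying $\ell$, $\ell(n)^{o(1/\log n)}\to1$) yields the stated asymptotics in each regime.
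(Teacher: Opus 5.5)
Your overall architecture matches the paper's. You reduce the $K_n$-normalized $\alpha$-score to $\alpha^{-1}-\Psi_n(\alpha)$, deduce $\hat\alpha_n\to\alpha^*$, and upgrade this to $\log n\,(\hat\alpha_n-\alpha^*)\to0$ using the $\log n$-weighted part of (B). You then read the four regimes off the $\theta$-equation. Your profile formula $\theta\approx(\alpha K_n)^{1/(1-\alpha)}n^{-\alpha/(1-\alpha)}$ and the collapse towards $-\alpha$ when $\alpha>\gamma$ are what the paper's final step produces.

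The genuine gap is in Step 2. You justify $\hat\theta_n\log n/K_n\to0$ ``in every regime'' through the profile formula. But which regime applies, and whether the profile root is small compared with $K_n$, both depend on $\hat\alpha_n$ staying away from $0$, i.e.\ on the consistency you are trying to establish. The profile asymptotics are not uniform in $\alpha$. As $\alpha\downarrow0$ the $\theta$-equation degenerates to $K_n\approx\theta\log(n/\theta)$, whose root is of order $K_n/\log n$. There $\theta\log n/K_n$ does not vanish, so your error bound $O(\theta\log n/K_n)$ is only $O(1)$. The decomposition $\alpha^{-1}-\theta(\alpha K_n)^{-1}\sum_i(\theta+i\alpha)^{-1}$ also becomes a difference of two exploding terms. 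As written, the argument does not exclude $\hat\alpha_n$ drifting towards $0$, so the ``standard Z-estimator argument'' is not yet justified.

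What is missing is an a priori bound on $\hat\theta_n$ that does not involve $\hat\alpha_n$. The paper gets it from the $\theta$-score alone. Multiplied by $\theta$, it reads $K_n=\sum_{i=0}^{n-1}\hat\theta_n/(\hat\theta_n+i)+\sum_{i=0}^{K_n-1}\hat\alpha_n i/(\hat\theta_n+i\hat\alpha_n)$. Dropping the nonnegative second sum gives $K_n\ge\hat\theta_n\log\{1+(n-1)/\hat\theta_n\}$, hence $\hat\theta_n\le dK_n/\log n$ with probability tending to one for any $d>1/(1-\gamma)$, whatever $\hat\alpha_n$ is. Your Step 3 idea of monotonicity of the profile root in $\alpha$ would give an equivalent bound, since the root is largest in the limit $\alpha\downarrow0$. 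However, it has to be used before, not after, the consistency step.

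The paper then evaluates the $\alpha$-score at the \emph{fixed} value $\hat\theta_n$ instead of profiling:
\begin{itemize}
\item $\alpha\mapsto K_n^{-1}\partial_\alpha\log p_{\alpha,\hat\theta_n}$ is strictly decreasing.
\item Its distance from $\alpha^{-1}-\Psi_n(\alpha)$ is at most $\alpha^{-2}(\hat\theta_n/K_n)\log(1+\alpha K_n/\hat\theta_n)+(\alpha K_n)^{-1}$, which tends to $0$ for each fixed $\alpha$. This sharper form is needed; $\theta\log n/K_n$ would only be $O(1)$.
\item Checking signs at $\alpha^*\pm\epsilon$ gives consistency with no uniformity in $\alpha$ and no separate treatment of small $\alpha$.
\end{itemize}
Only afterwards is the bound sharpened, via the $\theta$-equation at $\hat\alpha_n$, to $\hat\theta_n\le n^{\max\{\beta,0\}+\delta}$. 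This makes the error $o_p(1/\log n)$ and yields the rate by a Taylor expansion of $\alpha^{-1}-\Psi(\alpha)$ at $\alpha^*$. It also dissolves your ``main obstacle'', since no uniform control of the profile root over a shrinking neighbourhood is required.

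Two minor points. Uniqueness of the critical point is neither needed nor established, because all bounds hold at any interior maximizer. Also, the maximizer can never sit at $\theta=-\alpha$, because $\log(\theta+\alpha)\to-\infty$ there.
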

\begin{remark}
The proof of Theorem \ref{thm:mle} requires a careful manipulation of the gradient of $\log p_{\alpha, \theta}\left(N_{1,n}, \dots, N_{K_n, n} \right)$ and can be found in the Appendix. The main step consists in working with
\begin{equation}\label{eq:grad}
\frac{1}{K_n}\frac{\partial }{\partial\alpha}\log p_{\alpha, \theta}\left(N_{1,n}, \dots, N_{K_n, n} \right)  =   \frac{1}{K_n}\sum_{i=1}^{K_n-1}\frac{i}{\theta+i\alpha}  -   \sum_{r\geq2}\frac{M_{r,n}}{K_n}\sum_{m=1}^{r-1}\frac{1}{m-\alpha}.
\end{equation}
In particular, for every $\alpha \in (0,1)$ and suitably small $\theta$, we show that \eqref{eq:grad} converges to
\[
\frac{1}{\alpha}-\sum_{r\ge2} p_r \sum_{i=1}^{r-1}\frac{1}{i-\alpha},
\]
which does not depend on $\theta$. This motivates the definition of $\alpha^*$ in \eqref{eq:fixed-point}.
\end{remark}

Theorem~\ref{thm:mle} is instructive for several reasons. First of all, it shows that asymptotically the estimation of $\alpha$ depends only on $(p_r)_r$, the limit of the frequency-of-frequencies spectrum, and in particular it is not directly related to $K_n$. This is somewhat surprising, since in the EP model $K_n$ behaves as $n^\alpha$: however it is coherent with the fact that, again under the EP model with fixed $(\alpha, \theta)$, it holds that $(M_{r,n}/K_n)_r$ converges almost surely to a distribution $q_\alpha$, which does not depend on $\theta$ \citep[Lemma 1.1]{pitman2006combinatorial}.

Moreover, in contrast with the common view that $\theta$ cannot be properly estimated, we show that it actually plays a relevant role and the asymptotic behaviour of $\hat \theta_n$ varies significantly among the four scenarios. In Section \ref{sec:exchangeability} we will also emphasize that all the previously available results \citep{balocchi2026bayesian, franssen2022empirical, koriyama2026asymptotic} correspond to the case $\alpha^* = \gamma$.

Finally, focusing on the scenario $\alpha^* < \gamma$, which we empirically observe in the datasets we analyze in Section \ref{sec:numeric}, we show in Section \ref{sec:pyp-scaled} that the value of $\beta$ plays the role of correcting the mismatch between the growth of $K_n$ and the asymptotic behaviour of $M_{r, n}/K_n$. See Theorem \ref{thm:scaled-pyp} for more details.


\subsection{The four regimes and connection with infinite exchangeability}\label{sec:exchangeability}

A consequence of all the previously available results on the estimation of the discount and strength parameters $(\alpha, \theta)$ is that $\hat \alpha_n \xrightarrow[]{\mathbb P} \gamma$, which corresponds to two of the four regimes identified in Theorem \ref{thm:mle}: in this section we argue that this is an inevitable consequence of assumption (A) combined with infinite exchangeability of $(X_1, X_2 \dots)$. First of all, the settings of the previous papers imply
\begin{equation}\label{eq:old_assumptions}
\frac{K_n}{n^{\gamma}\ell(n)}\xrightarrow[]{\mathbb P} C \quad \text{and} \quad \frac{M_{r,n}}{K_n}\xrightarrow[]{\mathbb P} \frac{\gamma\Gamma(r-\gamma)}{r!\Gamma(1-\gamma)} =:q_{\gamma}(r),
\end{equation}
where $C$ is a (possibly degenerate) random variable and $q_\gamma$ is the probability distribution of a Sibuya random variable with parameter $\gamma$ \citep{sibuya1979generalized}. Indeed \cite{koriyama2026asymptotic} and \citet[Section $6.1$]{balocchi2026bayesian} consider data generated from the EP model directly, where \eqref{eq:old_assumptions} is known to hold with random $C$ \citep{pitman2006combinatorial}. Instead \cite{franssen2022empirical} and \citet[Section $6.2$]{balocchi2026bayesian} assume $(X_{1:n})_n$ independently sampled from a fixed distribution, which is regularly varying in the sense of Karamata \citep{karlin1967central}: this implies \eqref{eq:old_assumptions} with constant $C$ \citep[Section 7]{gnedin2007notes}.

As a consequence of \citet[Lemma 3]{koriyama2026asymptotic}, if $p_r = q_\gamma(r)$ in Assumption (B) then $\alpha^* = \gamma$: therefore, by Theorem \ref{thm:mle}, \eqref{eq:old_assumptions} implies $\hat \alpha_n = \gamma + o_p(1)$.  Moreover under \eqref{eq:old_assumptions} it is clear that $\alpha$ characterizes the asymptotic behaviour of both the number of blocks and frequency-of-frequencies spectrum almost totally, with $\theta$ only possibly impacting $C$. This intuitively explains why in the papers mentioned above either $\hat \theta_n = O_p(1)$ or it grows very slowly \citep[Theorem 3]{franssen2022empirical}, coherently with the findings in Theorem \ref{thm:mle}: additionally, when the data are generated from the EP model, $\hat \theta_n$ is not consistent for $\theta$. Interestingly \citet[Theorem 2]{schweinsberg2010number} proves that, if $(X_1, X_2, \dots)$ is infinitely exchangeable and the left-hand side of \eqref{eq:old_assumptions} holds, then also the right-hand side must hold. Thus, under infinite exchangeability, the asymptotic behaviour of $K_n$ characterizes also the one of the frequency-of-frequencies spectrum: therefore they can be both captured by the discount parameter alone.

Within this perspective the other two regimes of Theorem \ref{thm:mle} can be considered as being misspecified, namely they correspond to fitting a EP model (which implies infinite exchangeability) to non-infinitely exchangeable data. In the datasets of Figure \ref{fig:theta-real} we find some empirical evidence against \eqref{eq:old_assumptions}, in the sense that the behaviours of $K_n$ and $M_{r,n}/K_n$ cannot always be both matched with a single parameter: see Section \ref{sec:numeric} for more details. In the case $\alpha^* < \gamma$, which we observe in the empirical scenarios, $\alpha$ underestimates the growth rate of the number of blocks in order to provide a better fit for the frequency-of-frequencies spectrum: therefore $\hat \theta_n$ grows polynomially fast to match the "missing rate" for $K_n$. This is formalized in the next section.

\section{A scaled Ewens--Pitman model}\label{sec:pyp-scaled}

The first regime of Theorem \ref{thm:mle}, where the discount parameter $\alpha$ underestimates the growth rate of $K_n$, leads to an explosion of $\hat\theta_n$, which we also observe empirically. This suggests to consider a modified version of the EP model, where $\theta$ grows with $n$ instead of being fixed. In the next theorem we provide relevant asymptotic properties of this specification, which we call scaled EP model.
\begin{theorem}\label{thm:scaled-pyp}
Let $\lambda>0$, $\alpha\in (0,1)$ and $\beta\in(0,1)$. Consider the EP model with parameters $(\alpha,\theta_n)$, where $\theta_n=\lambda n^\beta$. Then 
\begin{equation}\label{eq:k_a_b}
    \frac{K_n}{n^{\alpha+\beta(1-\alpha)}}\xrightarrow[]{\mathbb P}\frac{\lambda^{1-\alpha}}{\alpha} \quad \text{and} \quad  \frac{M_{r,n}}{K_n}\xrightarrow[]{\mathbb P} q_\alpha(r),
\end{equation}
for every $r \geq 1$, with $q_\alpha$ as in \eqref{eq:old_assumptions}.
\end{theorem}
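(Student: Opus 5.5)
The plan is a first- and second-moment argument for the triangular array in which, for each $n$, the partition of $[n]$ is drawn from the EP model with parameters $(\alpha,\theta_n)$, $\theta_n=\lambda n^\beta$. Write $\theta=\theta_n$ throughout. Two features of this regime drive the argument: $\theta\to\infty$ and $\theta/n\to0$, because $\beta\in(0,1)$. The basic tool is the Gamma-ratio estimate
\[
\frac{\Gamma(x+a)}{\Gamma(x)}=x^{a}\{1+O(1/x)\}, \qquad x\to\infty,
\]
applied uniformly for $a$ in compact sets. We use it both with $x=\theta$ and with $x=\theta+n$. In every case below $x$ diverges, so all the error terms are $o(1)$.

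\textbf{Step 1: the mean of $K_n$.} We start from the classical exact formula
\[
E[K_n]=\frac{\theta}{\alpha}\Big\{\frac{(\theta+\alpha)_n}{(\theta)_n}-1\Big\}=\frac{\theta}{\alpha}\Big\{\frac{\Gamma(\theta+\alpha+n)\Gamma(\theta)}{\Gamma(\theta+\alpha)\Gamma(\theta+n)}-1\Big\}.
\]
The Gamma-ratio estimate gives $(\theta+\alpha)_n/(\theta)_n=((\theta+n)/\theta)^{\alpha}(1+o(1))$. Since $\theta/n\to0$, this equals $(n/\theta)^{\alpha}(1+o(1))$, and in particular it diverges. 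Hence
\[
E[K_n]\sim \frac{\theta^{1-\alpha}n^\alpha}{\alpha}=\frac{\lambda^{1-\alpha}}{\alpha}\,n^{\alpha+\beta(1-\alpha)},
\]
which is exactly the normalisation in \eqref{eq:k_a_b}.

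\textbf{Step 2: concentration of $K_n$.} We use the exact rising-factorial moment identity
\[
E\Big[\Big(\tfrac{\theta}{\alpha}+K_n\Big)\Big(\tfrac{\theta}{\alpha}+K_n+1\Big)\Big]=\tfrac{\theta}{\alpha}\Big(\tfrac{\theta}{\alpha}+1\Big)\frac{(\theta+2\alpha)_n}{(\theta)_n}.
\]
Expanding it with the same Gamma asymptotics gives $E[K_n^2]=(E K_n)^2\{1+o(1)\}$, so $\mathrm{Var}(K_n)=o((EK_n)^2)$. Chebyshev's inequality then yields the first limit in \eqref{eq:k_a_b}.

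This is the step I expect to be the main obstacle. The leading terms $\theta^2\alpha^{-2}(n/\theta)^{2\alpha}$ cancel against $(EK_n)^2$, so the correction terms must be tracked carefully. These come from the $O(1/\theta)$ errors in the Gamma ratios and from the $\theta/\alpha$ shifts. The point to verify is that each of them is $o(\theta^{2-2\alpha}n^{2\alpha})$. This holds because the relative errors are $O(1/\theta+\theta/n)\to0$, and because the cross terms are of order $\theta\cdot\theta^{1-\alpha}n^\alpha=o(\theta^{2-2\alpha}n^{2\alpha})$, using $(n/\theta)^\alpha\to\infty$.

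\textbf{Step 3: the frequency spectrum.} For fixed $r\ge1$ we use the exact formula
\[
E[M_{r,n}]=\binom{n}{r}(1-\alpha)_{r-1}\,\theta\,\frac{(\theta+\alpha)_{n-r}}{(\theta)_n}.
\]
The Gamma ratio gives $\theta(\theta+\alpha)_{n-r}/(\theta)_n\sim \theta^{1-\alpha}n^{\alpha-r}$, while $\binom{n}{r}\sim n^r/r!$. Therefore
\[
E[M_{r,n}]\sim \frac{(1-\alpha)_{r-1}}{r!}\,\theta^{1-\alpha}n^\alpha, \qquad \frac{E[M_{r,n}]}{E[K_n]}\to \frac{\alpha\,\Gamma(r-\alpha)}{r!\,\Gamma(1-\alpha)}=q_\alpha(r).
\]
For concentration we use the analogous exact formula for the second factorial moment $E[M_{r,n}(M_{r,n}-1)]$ from the EPPF. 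It has the form $\frac{n!}{r!^2(n-2r)!}\{(1-\alpha)_{r-1}\}^2\,\theta(\theta+\alpha)\frac{(\theta+2\alpha)_{n-2r}}{(\theta)_n}$. By the same asymptotics it is $(E M_{r,n})^2(1+o(1))$. Moreover $E M_{r,n}\to\infty$, so the diagonal term $E[M_{r,n}]$ is negligible. Chebyshev then gives $M_{r,n}/E[M_{r,n}]\to1$ in probability. Combining this with Step 2 yields $M_{r,n}/K_n\to q_\alpha(r)$ in probability for every $r\ge1$.
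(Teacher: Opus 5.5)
Your proposal is correct and follows essentially the same route as the paper: exact first and second moment formulas for $K_n$ and $M_{r,n}$, Gamma-ratio asymptotics driven by $\theta_n\to\infty$ and $\theta_n/n\to 0$, Chebyshev's inequality, and a final Slutsky step for $M_{r,n}/K_n$. The only difference is cosmetic. You work with the rising-factorial moment of $K_n+\theta_n/\alpha$ and the second factorial moment of $M_{r,n}$, and you only show $\mathrm{Var}(K_n)=o\big((E K_n)^2\big)$ and $\mathrm{Var}(M_{r,n})=o\big((E M_{r,n})^2\big)$, which is all Chebyshev needs, whereas the paper writes out explicit variance asymptotics.
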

\begin{remark}
The proof of Theorem \ref{thm:scaled-pyp} relies on asymptotic expansions of the moments of $K_n$ and $M_{r,n}$. They can be found in the Appendix as a corollary of more general results, including the cases $\alpha =0$ and $\beta = 1$. Notice that the setting $\beta = 1$ has been already studied in \cite{BerahaFavaro} for microclustering applications.
\end{remark}

Notice that according to \eqref{eq:k_a_b} the asymptotic behaviour of $M_{r_n}/K_n$ is the same of \eqref{eq:old_assumptions}. However the growth of $K_n$ now depends on both $\alpha$ and $\beta$: therefore, in contrast with \eqref{eq:old_assumptions}, the asymptotics of the number of blocks and the frequency-of-frequencies spectrum have been decoupled. Moreover the values $\alpha = \alpha^*$ and $\beta=(\gamma-\alpha^*)/(1-\alpha^*)$, which correspond to the regime $\alpha^* < \gamma$ of Theorem~\ref{thm:mle}, by  \eqref{eq:k_a_b} imply $K_n = O(n^\gamma)$ coherently with Assumption (A). Therefore this provides a theoretical explanation of what we observe empirically, i.e.\ that $\hat \theta_n$ grows to match the number of blocks.

Since the scaled EP model corresponds to a EP formulation with a choice of $\theta$ that depends on $n$, the law of $(X_{1:n})$ is exchangeable for every $n$ but not infinitely exchangeable: in particular projectivity is lost. As we discussed in Section \ref{sec:exchangeability}, this seems unavoidable in order to model $K_n$ and $M_{r,n}/K_n$ separately. Moreover the limit of $K_n$ in \eqref{eq:k_a_b} is deterministic and not random as in the EP model. This suggests the possibility of constructing asymptotically Gaussian credible intervals for the number of blocks, which we leave for future work.

\section{Numerical Illustrations}\label{sec:numeric}

\begin{figure}[t]
    \centering
    \includegraphics[width=\linewidth]{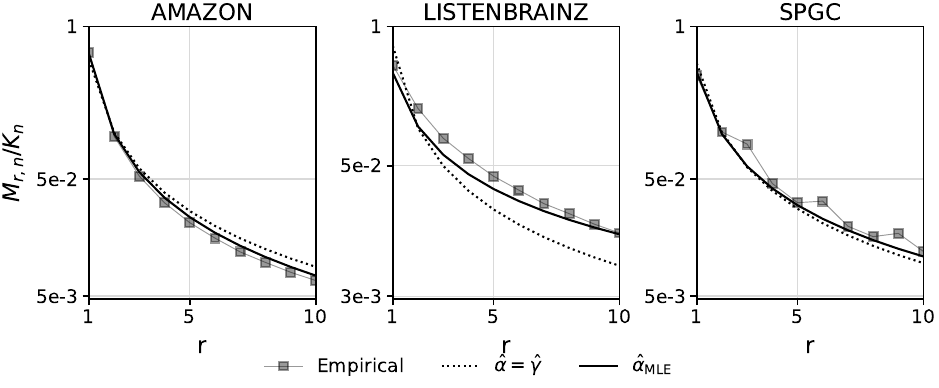}
    \caption{Empirical and estimated frequency-of-frequency spectrum}
    \label{fig:real-data-spectrum}
\end{figure}

\begin{table}[t]
\centering
\begin{tabular}{lccccc}
\toprule
Dataset & $n$ & $K_n$ & $\hat{\gamma}$ & $\hat{\alpha}_{n}$ & $\Delta_{2}$ \\
\midrule
AMAZON & 3,000,000 & 67,786 & 0.520  & 0.487  & 0.0305 \\
LISTENBRAINZ & 10,000,000 & 1,602,241  & 0.527 & 0.36 & 0.1508 \\
SPGC & 10,000,000 & 147,170 & 0.487 & 0.409 & 0.0664 \\
\bottomrule
\end{tabular}
\caption{Summary of the three real datasets of Section \ref{sec:numeric}, where $\Delta_2$ is as in \eqref{eq:squared_difference}.}
\label{tab:pyp-real-data-parameters}
\end{table}

We illustrate our results on three datasets obtained from public data sources. 
For the Amazon data, we use the Books subset of the Amazon product-review data from the SNAP/McAuley collection 
(\url{https://snap.stanford.edu/data/amazon/productGraph/}); observations are words extracted from the review text, blocks correspond to distinct words, and \(N_{j,n}\) is the number of occurrences of the \(j\)-th word. 
For ListenBrainz, we use the public ListenBrainz full-export dumps 
(\url{https://ftp.musicbrainz.org/pub/musicbrainz/listenbrainz/fullexport/}); observations are listening events, blocks correspond to different MusicBrainz recording identifiers, and \(N_{j,n}\) is the number of listens associated with the \(j\)-th MusicBrainz recording identifier. 
In the Standardized Project Gutenberg Corpus (SPGC), we use the SPGC-2018-07-18 token archive 
(\url{https://zenodo.org/records/2422561}); observations are words, blocks correspond to distinct words, and \(N_{j,n}\) is the number of occurrences of the \(j\)-th word.
See Table~\ref{tab:pyp-real-data-parameters} for the sample size \(n\) and the number of blocks \(K_n\) in the different datasets.

Figure~\ref{fig:theta-real} reports the maximum-likelihood estimate of \(\theta\) along increasing sample sizes.
For each dataset and each sample size, we fit the EP likelihood after randomly permuting the observations 100 times; the solid line is the median over permutations and the shaded region is the corresponding pointwise central band. 
Table~\ref{tab:pyp-real-data-parameters} reports the Heaps exponent \(\hat{\gamma}\), estimated by a log-log regression of \(K_m\) on \(m\) over increasing prefixes, and the  maximum-likelihood estimate \(\hat{\alpha}_{n}\). 
Finally, the last column measures the improvement in the fit of the frequency spectrum obtained by using \(\hat{\alpha}_{n}\) instead of \(\hat{\gamma}\):
\begin{equation}\label{eq:squared_difference}
    \Delta_2 =
    \left\|\hat p - q_{\hat{\gamma}}\right\|_2
    -
    \left\|\hat p - q_{\hat{\alpha}_{n}}\right\|_2,
\end{equation}
where $\hat p = (\hat p_1, \hat p_2, \ldots)$, $ \hat p_r = \frac{M_{r,n}}{K_n}$ is the observed frequency-of-frequency spectrum and $\|\cdot\|_2$ is the standard Euclidean norm. Positive values of \(\Delta_2\) indicate that \(\hat{\alpha}_{n}\) gives a better approximation to the observed frequency-of-frequencies spectrum than \(\hat{\gamma}\).
The joint maximization of the log-likelihood is performed by profiling: for each fixed \(\alpha\), we maximize the log-likelihood over \(\theta\), and then maximize the profiled objective over \(\alpha\). This avoids a poorly conditioned two-dimensional optimization, especially in regimes where the fitted value of \(\theta\) is large. 

The three datasets exhibit the same qualitative pattern. The estimate \(\hat{\theta}_n\) grows with \(n\), and the fitted discount \(\hat{\alpha}_{n}\) is smaller than the Heaps exponent \(\hat{\gamma}\), particularly so in the ListenBrainz dataset. Figure~\ref{fig:real-data-spectrum} explains this behaviour: the empirical frequency spectrum \(M_{r,n}/K_n\) is better approximated by the Sibuya law with parameter \(\hat{\alpha}_{\mathrm{MLE}}\) than by the Sibuya law with parameter \(\hat{\gamma}\), as also reflected by the positive values of \(\Delta_2\) in Table~\ref{tab:pyp-real-data-parameters}. 
In particular, these examples provide empirical evidence that the likelihood uses \(\alpha\) primarily to fit the frequency spectrum, while \(\theta\) increases to compensate for the faster growth of \(K_n\).

\section{Discussion}

The main finding of this work is that MLEs of the discount and strength parameters $(\alpha,\theta)$ in the Ewens--Pitman model are strongly influenced by the data-generating mechanism. If the observations do not come from an infinitely exchangeable law, and therefore the behaviour of $K_n$ and $M_{r, n}/K_n$ do not match exactly as in \eqref{eq:old_assumptions}, then $(\alpha,\theta)$ are needed to capture relevant characteristics of the data. 

A natural question is whether it is possible to consider a more general class of models, including infinite exchangeability as a particular case: the scaled EP model of Section \ref{sec:pyp-scaled} is an example, recovering the standard EP model with $\beta \to 0$. The main difficulty is that projectivity is lost, and therefore the predictive distribution is not well-defined. It is then necessary to address the problem of performing principled prediction in a non-infinitely exchangeable setting, to avoid the restrictive conditions in \eqref{eq:old_assumptions}: we leave this to a future work.

It would be interesting to discuss the rates of convergence of the MLEs of $(\alpha, \theta)$, in the same spirit of \cite{franssen2022empirical, koriyama2026asymptotic} for the well-specified case, and whether they differ depending on the regimes. Similarly we believe that it is possible, but possibly challenging from a technical perspective, to deduce Gaussian central limit theorems for $K_n$ and $M_{r, n}/K_n$ in the setting of Theorem \ref{thm:scaled-pyp}.

\paragraph{Acknowledgements} FA was partially supported by the National Institute of Health (grant ID 1R01-GM163225-01).





\bibliographystyle{chicago}
\bibliography{paper-ref}

\clearpage
\appendix
\section*{Appendix}

\renewcommand{\thesection}{\Alph{section}}
\setcounter{section}{0}

\numberwithin{equation}{section}

\numberwithin{lemma}{section}
\renewcommand{\thelemma}{\thesection.\arabic{lemma}}

\numberwithin{proposition}{section}
\renewcommand{\theproposition}{\thesection.\arabic{proposition}}

\numberwithin{theorem}{section}
\renewcommand{\thetheorem}{\thesection.\arabic{theorem}}

\section{Proof of Theorem \ref{thm:mle}}

We first briefly outline the proof strategy:
\begin{enumerate}
\item Prove that $\alpha^*$ as in \eqref{eq:fixed-point} is well-defined (Lemma \ref{lem:alpha_star_unique}).

\item Prove that $(\hat \alpha_n, \hat \theta_n)$ belongs to the interior of the domain, with probability going to one, and $\hat \theta_n$ is asymptotically smaller than $n^\gamma \ell(n)$ (Lemma \ref{lm:interior}).

\item Prove first that $\hat \alpha_n \xrightarrow[]{\mathbb P} \alpha^*$ (Lemma \ref{lm:convergence_alpha}) and then strengthen the result to $\log n (\hat \alpha_n - \alpha^*) \xrightarrow[]{\mathbb P} 0$ (Corollary \ref{cor:convergence_rate}).

\item Combine the previous points to deduce the four regimes.
\end{enumerate}
Notice that points $1.$ and $2.$ are reminiscent of the results in \citep[Section 3.2]{koriyama2026asymptotic}, under our more general hypotheses.

First of all, we recall a well-known result on slowly-varying function, see e.g.\ Proposition $1.3.6$ in \cite{bingham1989regular}.
\begin{lemma}\label{lm:slowly}
Let $\ell$ be a slowly-varying function. Then $\log (\ell(n))/\log(n) \to 0$ as $n \to \infty$.
\end{lemma}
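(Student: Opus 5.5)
The claim is that $\log \ell(n)/\log n \to 0$ for a slowly varying $\ell$. The plan is to derive this from the uniform convergence theorem for slowly varying functions, or more elementarily from the defining limit along dyadic scales.

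\emph{Step 1.} Fix $\varepsilon>0$. By slow variation with $a=2$, there is $x_0$ such that
\[
e^{-\varepsilon}\le \ell(2x)/\ell(x)\le e^{\varepsilon}\quad\text{for all } x\ge x_0 .
\]
The obstacle is that this controls $\ell$ only along the geometric sequence $x_0 2^k$, while $n$ ranges over all integers.

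\emph{Step 2.} To bridge the gaps we need $\log\ell$ bounded on the fundamental interval $[x_0,2x_0]$. Here we invoke the uniform convergence theorem (Bingham et al., Theorem 1.2.1), which presupposes measurability of $\ell$. It states that $\ell(ax)/\ell(x)\to1$ uniformly for $a$ in compact subsets of $(0,\infty)$. Taking $x_0$ larger if necessary, we get
\[
|\log \ell(ax)-\log\ell(x)|\le\varepsilon\quad\text{for all } a\in[1,2],\ x\ge x_0 .
\]
Consequently $|\log\ell(y)|\le |\log\ell(x_0)|+\varepsilon$ on $[x_0,2x_0]$.

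\emph{Step 3.} Any $n\ge x_0$ can be written as $n=a\,2^k x_0$ with $a\in[1,2)$ and $k=\lfloor\log_2(n/x_0)\rfloor$. Telescoping along the dyadic scales and applying the uniform bound of Step 2 at each step gives
\[
|\log\ell(n)|\le |\log\ell(x_0)|+(k+1)\varepsilon .
\]

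\emph{Step 4.} Divide by $\log n$ and use $k\le \log_2 n+O(1)$. This yields
\[
\limsup_{n\to\infty} \frac{|\log\ell(n)|}{\log n}\le \frac{\varepsilon}{\log 2}.
\]
Since $\varepsilon>0$ was arbitrary, the limit is $0$.

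An alternative route is the Karamata representation $\ell(x)=c(x)\exp\{\int_{x_0}^x \eta(u)\,du/u\}$, with $c(x)\to c>0$ and $\eta(u)\to0$. It gives the result immediately, because $\int_{x_0}^n \eta(u)\,du/u=o(\log n)$. In either approach the only nontrivial step is passing from pointwise to uniform control; this is exactly the content of the cited Proposition 1.3.6, so we would ultimately just cite it.
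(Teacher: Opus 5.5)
Your argument is correct, but it is not the route the paper relies on. The paper gives no proof: it recalls the statement as Proposition 1.3.6 of \cite{bingham1989regular}. There it follows from the Karamata Representation Theorem, which is your ``alternative route'': one writes $\log\ell(n)=\log c(n)+\int_{x_0}^n\eta(u)\,du/u$ and notes that the integral is $o(\log n)$.

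Your main argument goes directly from the Uniform Convergence Theorem (Theorem 1.2.1 there), via dyadic telescoping, to the explicit bound $|\log\ell(n)|\le|\log\ell(x_0)|+(\log_2(n/x_0)+1)\varepsilon$. This is more self-contained, since the Representation Theorem is itself derived from the UCT. It also makes clear what is actually used: boundedness of $\log\ell$ on compact intervals far out, plus the single limit $\ell(2x)/\ell(x)\to1$. The citation route is a one-liner, but it hides that same ingredient inside a heavier theorem.

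You are right to flag measurability. The paper's informal definition of slow variation omits it, but both the UCT and the Representation Theorem need it, and it is part of the definition in \cite{bingham1989regular}.

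One small inaccuracy is in your closing remark. Passing from pointwise to uniform control is the content of the UCT, not of Proposition 1.3.6. That proposition is simply the statement you are proving.
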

We will also need the following technical lemma.
\begin{lemma}\label{lm:technical_integral}
For every $K > 0$, $m \geq 0$,  $\alpha > 0$ and $\theta$ we have that
\[
\frac{1}{\theta +\alpha(K-1)} \leq \sum_{i = m}^{K-1}\frac{1}{\theta+i\alpha}-\frac{1}{\alpha}\log \left(1+\frac{\alpha}{\theta+\alpha m}(K-m-1) \right) \leq \frac{1}{\theta+\alpha m}.
\]
\end{lemma}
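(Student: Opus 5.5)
The plan is to compare the sum with integrals of the decreasing function $f(x)=1/(\theta+\alpha x)$ on $[m,K-1]$, in the standard way. I assume implicitly that $\theta+\alpha m>0$, so that $f$ is positive and decreasing there. The integral is
\[
\int_m^{K-1}\frac{dx}{\theta+\alpha x}=\frac1\alpha\log\frac{\theta+\alpha(K-1)}{\theta+\alpha m}=\frac1\alpha\log\Bigl(1+\frac{\alpha}{\theta+\alpha m}(K-m-1)\Bigr),
\]
which is exactly the logarithmic term in the statement. So it suffices to show that
\[
f(K-1)\le \sum_{i=m}^{K-1}f(i)-\int_m^{K-1}f(x)\,dx\le f(m).
\]

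For the lower bound, use that $f$ is decreasing, so $f(i)\ge\int_i^{i+1}f(x)\,dx$ for each $i=m,\dots,K-2$. Summing gives $\sum_{i=m}^{K-2}f(i)\ge\int_m^{K-1}f$. Adding the last term $f(K-1)$ to both sides yields the left inequality.

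For the upper bound, use $f(i)\le\int_{i-1}^{i}f(x)\,dx$ for $i=m+1,\dots,K-1$. Summing gives $\sum_{i=m+1}^{K-1}f(i)\le\int_m^{K-1}f$. Adding the first term $f(m)$ gives the right inequality.

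The only delicate point is the degenerate range. If $K-1<m$, the sum is empty. If $K-1=m$, both sides reduce to $f(m)$ and the integral vanishes, so the claim holds trivially. Positivity of $\theta+\alpha m$ must be assumed, as in the applications to $\theta>-\alpha$ with $m\ge1$, or $m=0$ and $\theta>0$. Beyond this bookkeeping there is no real obstacle.
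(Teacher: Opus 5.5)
Your proof is correct and follows the same route as the paper: you compare the sum with $\int_m^{K-1}(\theta+\alpha x)^{-1}\,dx$ using that the integrand is decreasing, with $f(i)\ge\int_i^{i+1}f$ giving the lower bound and $f(i)\le\int_{i-1}^{i}f$ giving the upper bound. Your explicit hypotheses $\theta+\alpha m>0$ and $K-1\ge m$ are exactly what the paper leaves implicit; for $K-1<m$ the right inequality genuinely fails, so that case must be excluded rather than merely noted as an empty sum. Your sum also correctly starts at $i=m$, whereas the paper's displayed proof has a typo $i=1$.
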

\begin{proof}
Since the function $x \, \to \, 1/(\theta+\alpha x)$ is decreasing, by definition of Riemann integral we have that
\[
\frac{1}{\theta +\alpha(K-1)} \leq \sum_{i = 1}^{K-1}\frac{1}{\theta+i\alpha}-\int_m^{K-1}\frac{1}{\theta+\alpha x}\, \text{d} x \leq \frac{1}{\theta+\alpha m},
\]
from which the result follows.
\end{proof}
With $(p_r)_{r\geq 1}$ as in Assumption $(B)$, we define the following function
\begin{equation}\label{eq:definition_G}
G(\alpha)=\frac{1}{\alpha}-\sum_{r\ge2} p_r \sum_{i=1}^{r-1}\frac{1}{i-\alpha}, \quad \alpha \in (0,1).
\end{equation}
The next lemma proves that $\alpha^*$ in \eqref{eq:fixed-point} is well-defined.
\begin{lemma}\label{lem:alpha_star_unique}
Let Assumptions $(A)-(B)$ hold and let $G$ as in \eqref{eq:definition_G}.
Then $G$ is a continuous and strictly decreasing function on $(0,1)$ with
\[
\lim_{\alpha\downarrow 0}G(\alpha)=+\infty, \quad \lim_{\alpha\uparrow 1}G(\alpha)=-\infty.
\]
As a consequence, there exists a unique $\alpha^*\in(0,1)$ such that $G(\alpha^*)=0$ and therefore $\alpha^*$ in \eqref{eq:fixed-point} is well-defined
\end{lemma}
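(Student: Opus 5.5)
The plan is to analyse $G$ term by term. Write $G(\alpha)=1/\alpha - H(\alpha)$ with
\[
H(\alpha)=\sum_{r\ge2}p_r h_r(\alpha), \qquad h_r(\alpha)=\sum_{i=1}^{r-1}\frac{1}{i-\alpha}.
\]
Each $h_r$ is positive, continuous and strictly increasing on $(0,1)$, since every summand $1/(i-\alpha)$ with $i\ge1$ is.

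\textbf{Finiteness and continuity of $H$.} Fix a compact interval $[a,b]\subset(0,1)$. For $\alpha\le b$ we have
\[
h_r(\alpha)\le \frac{1}{1-b}+\sum_{i=2}^{r-1}\frac{1}{i-1}\le \frac{1}{1-b}+1+\log r .
\]
Hence $p_r h_r(\alpha)\le p_r\{(1-b)^{-1}+1+\log r\}$, and this bound is summable because $\sum_r p_r\log r<\infty$ by Assumption (B). The Weierstrass M-test then shows that $H$ is finite and continuous on $[a,b]$, and therefore on all of $(0,1)$. Consequently $G$ is continuous.

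\textbf{Strict monotonicity.} The map $1/\alpha$ is strictly decreasing and $H$ is nondecreasing, as a nonnegative combination of increasing functions. So $G$ is strictly decreasing. This holds even if $p_r=0$ for all $r\ge2$: in that case $G(\alpha)=1/\alpha$.

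\textbf{Limit as $\alpha\downarrow0$.} By monotonicity, $H(\alpha)\le H(1/2)<\infty$ for $\alpha\le 1/2$. Since $1/\alpha\to+\infty$, we get $G(\alpha)\to+\infty$.

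\textbf{Limit as $\alpha\uparrow1$.} Here $H(\alpha)\ge \sum_{r\ge2}p_r/(1-\alpha)=(1-p_1)/(1-\alpha)$, while $1/\alpha\to1$. So $G(\alpha)\to-\infty$ provided $p_1<1$.

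The main obstacle is exactly this last step, because the argument needs $\sum_{r\ge2}p_r>0$. If $p_1=1$, then $G=1/\alpha$ has no zero and the claim fails. I would therefore argue that the degenerate case $p_1=1$ is excluded, and I would first try to deduce this from the setting. The cleanest route is to note that $(p_r)$ being a probability mass function with the stated properties is intended to be non-degenerate, and to state this explicitly. Failing that, I would flag it as an implicit assumption: the statement requires $p_1<1$, i.e.\ not all blocks are asymptotically singletons, which is implicitly required for \eqref{eq:fixed-point} to have a solution.

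Granting $p_1<1$, the intermediate value theorem applied to the continuous function $G$ gives a zero in $(0,1)$. Strict monotonicity gives uniqueness. Hence $\alpha^*$ in \eqref{eq:fixed-point} is well-defined.
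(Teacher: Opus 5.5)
Your proof is correct and is essentially the paper's argument: the same bound $\sum_{i=1}^{r-1}(i-\alpha)^{-1}\le (1-\alpha)^{-1}+1+\log r$ combined with $\sum_{r}p_r\log r<\infty$, then monotonicity and the two limits, with $p_1<1$ needed only for $\alpha\uparrow 1$. The differences are cosmetic. You use the M-test on compact subintervals where the paper invokes dominated convergence. You get strict monotonicity from the order structure, whereas the paper computes $G'(\alpha)<0$; the paper's route also records the differentiability of $G$, which it reuses in the Taylor step of Corollary~\ref{cor:convergence_rate}.

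Your hesitation about $p_1<1$ is well founded, and it can be settled negatively. The paper simply asserts ``$p_1<1$ by (A)'', but (A)--(B) do not imply it. Take $K_n=\lfloor n^\gamma\rfloor$ blocks, $K_n-1$ of them singletons and one of size $n-K_n+1$. This is realised, for instance, by the deterministic sequence that repeats one label and introduces a fresh label exactly when $\lfloor n^\gamma\rfloor$ increases. Then:
(A) holds with $C=1$ and $\ell\equiv 1$;
$M_{1,n}/K_n\to 1$ and $M_{r,n}/K_n\to 0$ for every fixed $r\ge 2$;
the weighted error in (B) equals $\log n\,\log(n-K_n+1)/K_n\to 0$.

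So (A)--(B) hold with $p_1=1$. In that case $G(\alpha)=1/\alpha$ has no zero in $(0,1)$ and the lemma fails. The point is that a vanishing fraction of blocks can carry almost all the mass, so $\gamma<1$ does not force $p_1<1$. Hence $p_1<1$, i.e.\ $\sum_{r\ge 2}p_r>0$, must be added as an explicit hypothesis, exactly as you propose.
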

\begin{proof}
Fix $\alpha\in(0,1)$. Since $\sum_{j=1}^{m} j^{-1}\le 1+\log m$, for every $r\ge2$ we have
\begin{align*}
\sum_{i=1}^{r-1}\frac{1}{i-\alpha}
&=\frac{1}{1-\alpha}+\sum_{i=2}^{r-1}\frac{1}{i-\alpha}
\\
&\le \frac{1}{1-\alpha}+\sum_{i=2}^{r-1}\frac{1}{i-1}
\le \frac{1}{1-\alpha}+1+\log r,
\end{align*}
where we used $\sum_{j=1}^{m} j^{-1}\le 1+\log m$. This implies that
\[
\sum_{r\ge2} p_r \sum_{i=1}^{r-1}(i-\alpha)^{-1} \leq \frac{2-\alpha}{1-\alpha}+\sum_{r\ge2} p_r \log(r).
\]
Since $\sum_{r\ge2} p_r \log r<\infty$ by assumption, by dominated convergence theorem we conclude that $G(\alpha)$ is a well-defined and differentiable function on $\alpha \in (0,1)$.

Since $p_1 < 1$ by $(A)$, it is immediate to prove that $\lim_{\alpha\downarrow 0}G(\alpha)=+\infty$ and $\lim_{\alpha\uparrow 1}G(\alpha)=-\infty$. Finally
\[
G'(\alpha) = -\frac{1}{\alpha^2}-\sum_{r\ge2} p_r \sum_{i=1}^{r-1}\frac{1}{(i-\alpha)^2} < 0,
\]
as desired.
\end{proof}
We define with
\[
\mathcal{D} = \left\{(\alpha, \theta) \, \mid \, \alpha \in [0,1], \,\theta > -\alpha \right\}
\]
the domain of possible values for $(\alpha, \theta)$. For future reference, we write the log-likelihood
\begin{equation}\label{eq:loglikelihood}
\begin{aligned}
     \ell_n(\alpha, \theta) &:= \log p_{\alpha, \theta}(n_{1}, \ldots, n_k) \\
     & = \sum_{i=1}^{k-1} \log(\theta + i \alpha) - \sum_{i=1}^{n-1} \log(\theta + i) + \sum_{j=1}^k \sum_{m=1}^{n_j - 1} \log(m - \alpha),
\end{aligned}
\end{equation}
and its first derivatives
\begin{equation}\label{eq:first_derivative_theta}
\frac{\partial \ell_n(\alpha, \theta)}{\partial \theta} = \sum_{i = 1}^{K_n-1}\frac{1}{\theta+i\alpha}-\sum_{i = 1}^{n-1}\frac{1}{\theta+i}
\end{equation}
and
\begin{equation}\label{eq:first_derivative_alpha}
\begin{aligned}
\frac{\partial \ell_n(\alpha, \theta)}{\partial \alpha} &= \sum_{i =1}^{K_n-1}\frac{i}{\theta+i\alpha}-\sum_{j =1}^{K_n}\sum_{m = 1}^{n_j-1}\frac{1}{m-\alpha}\\
&= \sum_{i = 1}^{K_n-1}\frac{i}{\theta+i\alpha}-\sum_{r= 2}^\infty M_{r,n}\sum_{i = 1}^{r-1}\frac{1}{i-\alpha}.
\end{aligned}
\end{equation}
We collect some basic properties of $(\hat \alpha_n, \hat \theta_n)$ in the following lemma.

\begin{lemma}\label{lm:interior}
Let Assumptions $(A)-(B)$ hold and let $(\hat\alpha_n,\hat\theta_n)$ be a local maximizer of $p_{\alpha, \theta}$. Then with probability $1+o_p(1)$ we have that $(\hat\alpha_n,\hat\theta_n)$ belongs to the interior of $\mathcal{D}$ for $n$ large enough and
\begin{equation}\label{eq:condition1}
\hat\theta_n \leq dC\frac{n^\gamma \ell(n)}{\log(n)}
\end{equation}
for some $d > 0$ and
\begin{equation}\label{eq:condition2}
K_n = \sum_{i = 0}^{n-1}\frac{\hat\theta_n}{\hat\theta_n+i}+\sum_{i = 0}^{K_n-1}\frac{\hat\alpha_n i}{\hat\theta_n+i\hat\alpha_n}
\end{equation}
and
\begin{equation}\label{eq:condition3}
\frac{1}{K_n}\sum_{i = 0}^{K_n-1}\frac{ i}{\hat\theta_n+i\hat\alpha_n} - \sum_{r\geq 2}\frac{M_{r,n}}{K_n}\sum_{i = 1}^{r-1}\frac{1}{i-\hat\alpha_n} = 0,
\end{equation}
in probability as $n \to \infty$.
\end{lemma}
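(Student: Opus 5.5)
The plan is to establish the interior property first, by ruling out each piece of the boundary of $\mathcal{D}$ together with $\theta\to\infty$. The bound \eqref{eq:condition1} then comes out of the analysis of large $\theta$. Once interiority holds, \eqref{eq:condition2}--\eqref{eq:condition3} are the first-order conditions $\nabla\ell_n(\hat\alpha_n,\hat\theta_n)=0$ written in a convenient form. Throughout I work on the event, of probability $1+o_p(1)$, on which the following hold:
\begin{itemize}
\item $K_n\ge2$;
\item $K_n$ lies between $\tfrac12 Cn^\gamma\ell(n)$ and $2Cn^\gamma\ell(n)$, by (A);
\item $\sum_{r\ge2}M_{r,n}/K_n\ge (1-p_1)/2>0$, by (B) and $p_1<1$.
\end{itemize}

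\emph{Boundary pieces and large $\theta$.}
\begin{itemize}
\item \emph{$\alpha\uparrow1$.} Some block has size $\ge2$, so the term $\log(1-\alpha)$ in \eqref{eq:loglikelihood} tends to $-\infty$.
\item \emph{$\theta\downarrow-\alpha$.} The term $\log(\theta+\alpha)$ (index $i=1$, present since $K_n\ge2$) tends to $-\infty$.
\item \emph{Large $\theta$.} I show $\partial_\theta\ell_n<0$ uniformly in $\alpha\in[0,1]$ whenever $\theta\ge dCn^\gamma\ell(n)/\log n$, for a suitable $d$. This excludes $\theta\to\infty$ and proves \eqref{eq:condition1} at once. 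The first sum in \eqref{eq:first_derivative_theta} is at most $(K_n-1)/\theta$, uniformly in $\alpha$. For the second sum I use Lemma~\ref{lm:technical_integral} with $\alpha=1$, $m=1$, which gives $\sum_{i=1}^{n-1}(\theta+i)^{-1}\ge\log\{1+(n-2)/(\theta+1)\}$. I then split into two ranges.
\begin{itemize}
\item For $\theta\ge n$, the lower bound is $\gtrsim n/\theta$, which beats $K_n/\theta$ because $K_n=o(n)$.
\item For $dCn^\gamma\ell/\log n\le\theta\le n$, the positive part is at most $2\log n/d$. The negative part is at least $\log(n/\theta)-O(1)$, and this is $\ge (1-\gamma)\log n-o(\log n)$ by Lemma~\ref{lm:slowly}. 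Taking $d>2/(1-\gamma)$ suffices on this range.
\end{itemize}
\item \emph{$\alpha\downarrow0$.} Here I use the bound just obtained, $\theta\le dCn^\gamma\ell/\log n$. For $\alpha$ small, $\sum_{i<K_n}i/(\theta+i\alpha)$ is at least of order $\min\{K_n^2/\theta,\,K_n/\alpha\}$. Both quantities are $\gg K_n$: the first is of order $K_n\log n$, and the second because $\alpha$ is small. The subtracted term in \eqref{eq:first_derivative_alpha} is $O(K_n)$, since $\sum_r M_{r,n}/K_n\sum_{i<r}(i-\alpha)^{-1}$ is bounded by the argument of Lemma~\ref{lem:alpha_star_unique} combined with (B). Hence $\partial_\alpha\ell_n>0$ near $\alpha=0$, so a maximizer cannot sit there.
\end{itemize}
With all of these excluded, $(\hat\alpha_n,\hat\theta_n)$ is interior and both partial derivatives vanish.

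\emph{First-order conditions.} Condition \eqref{eq:condition3} is just $\partial_\alpha\ell_n=0$ divided by $K_n$; the $i=0$ term added to the sum is zero. For \eqref{eq:condition2} I use the identity
\[
\sum_{i=0}^{K_n-1}\frac{\alpha i}{\theta+i\alpha}=K_n-1-\theta\sum_{i=1}^{K_n-1}\frac{1}{\theta+i\alpha}.
\]
Together with $\sum_{i=0}^{n-1}\theta/(\theta+i)=1+\theta\sum_{i=1}^{n-1}(\theta+i)^{-1}$, it shows that \eqref{eq:condition2} is equivalent to $\theta\,\partial_\theta\ell_n=0$.

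\emph{Main obstacle.} The delicate step is excluding $\alpha\to0$ jointly with small or intermediate $\theta$. Uniformity is needed over the whole region $\{\alpha\le\epsilon,\ -\alpha<\theta\le dCn^\gamma\ell/\log n\}$, which is where \eqref{eq:condition1} must be used. My first attempt is the explicit bound
\[
\sum_{i<K_n}\frac{i}{\theta+i\alpha}\ge \frac{K_n-1}{2}\cdot\frac{K_n/2}{\theta+K_n\alpha},
\]
obtained by restricting the sum to $i\ge K_n/2$, followed by a case split on whether $\theta\ge K_n\alpha$ or $\theta<K_n\alpha$.
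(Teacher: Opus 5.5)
Your architecture is sound, but the middle range of your large-$\theta$ step does not work as written. Write $T_n:=dCn^\gamma\ell(n)/\log n$. On $T_n\le\theta\le n$ you bound the positive part by its supremum over the range, $2\log n/d$, which is its value at $\theta=T_n$. You bound the negative part below by $(1-\gamma)\log n-o(\log n)$, but that is the value of $\log(n/\theta)$ at $\theta=T_n$, i.e.\ its \emph{maximum} over the range. For $\theta=n^\tau$ with $\gamma<\tau<1$ the negative part is only about $(1-\tau)\log n$. Once $1-\tau<2/d$ your two bounds no longer compare, whatever fixed $d$ you choose. The conclusion is still true, because the positive part $K_n/\theta$ also shrinks as $\theta$ grows, but replacing it by its supremum discards exactly that information.

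The repair is to keep the $\theta$-dependence. Multiply by $\theta$ to get $\theta\,\partial_\theta\ell_n\le K_n-1-g(\theta)$, where $g(\theta)=\theta\log\{1+(n-2)/(\theta+1)\}$. The function $g$ is increasing in $\theta$ (equivalently, $x\mapsto x^{-1}\log(1+x)$ is decreasing), so it suffices to check $\theta=T_n$. There $g(T_n)=d(1-\gamma)Cn^\gamma\ell(n)\{1+o_p(1)\}$ by Lemma~\ref{lm:slowly}. Hence $d>2/(1-\gamma)$ gives $\partial_\theta\ell_n<0$ on all of $\theta\ge T_n$ at once, and your separate range $\theta\ge n$ becomes unnecessary. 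Splitting instead at $\theta=n^\gamma\ell(n)\log^2 n$ would also work.

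With that fix, your route is a uniform, region-based version of the paper's. The paper never studies $\partial_\theta\ell_n$ on a region; it uses the score equation at the maximizer itself. That equation gives $K_n\ge\sum_{i<n}\hat\theta_n/(\hat\theta_n+i)\ge\hat\theta_n\log\{1+(n-1)/\hat\theta_n\}$, both in the interior (via \eqref{eq:condition2}) and at $\alpha=0$. The paper then contradicts (A) using the same monotonicity of $x^{-1}\log(1+x)$ as above. It checks the sign of $\partial_\alpha\ell_n$ only at the single point $(0,\hat\theta_n)$, where $\sum_{i<K_n}i/\hat\theta_n=K_n(K_n-1)/(2\hat\theta_n)$ exactly, so no case split on $\theta\ge K_n\alpha$ versus $\theta<K_n\alpha$ is needed.

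Your version costs that extra case analysis but gives more: $\partial_\alpha\ell_n>0$ on the whole set $\{\alpha\le\epsilon,\ -\alpha<\theta\le T_n\}$. This excludes local maximizers with $\hat\alpha_n$ near $0$, not only at $0$. The remaining pieces are correct: the $\alpha\uparrow1$ and $\theta\downarrow-\alpha$ cases, the $O(K_n)$ bound on the subtracted term from (B), the restricted-sum bound (which covers $-\alpha<\theta<0$ through its second case), and the first-order identities.
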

\begin{proof}
Let $C_n = \frac{K_n}{n^\gamma\ell(n)}$. By assumption (A), $C_n \to C$ in probability. From now on, work on the event $K_n \ge 2$, whose probability tends to one.

If $(\hat\alpha_n,\hat\theta_n)$ belongs to the interior of $\mathcal{D}$ then \eqref{eq:first_derivative_theta} and \eqref{eq:first_derivative_alpha} must be equal to zero at $(\hat\theta_n, \hat \alpha_n)$. The equation $K_n^{-1}\frac{\partial \ell_n(\alpha, \theta)}{\partial \alpha} = 0$ corresponds exactly to \eqref{eq:condition3}. Moreover
\[
0 = \hat\theta_n \frac{\partial \ell_n(\hat\alpha_n, \hat\theta_n)}{\partial \theta} = K_n - \sum_{i = 0}^{K_n-1}\frac{\hat\alpha_n i}{\hat\theta_n+i\hat\alpha_n}-\sum_{i = 0}^{n-1}\frac{\hat\theta_n}{\hat\theta_n+i},
\]
which corresponds exactly to \eqref{eq:condition2}. We are then left to show that $(\hat\alpha_n, \hat\theta_n)$ belongs to the interior of $\mathcal{D}$ and that \eqref{eq:condition1} holds.

It is immediate to see that $\ell_n \to -\infty$ if $\theta \to +\infty$ or $\alpha \to 1$ or $\theta \to -\alpha$. The last boundary case is $\hat{\alpha}_n = 0$, for which $\hat\theta_n$ must satisfy
\[
K_n = \sum_{i = 0}^{n-1}\frac{\hat\theta_n}{\hat\theta_n+i},
\]
which corresponds to \eqref{eq:condition2} with $\hat\alpha_n = 0$. Then we can always conclude by Lemma \ref{lm:technical_integral} that
\[
K_n = \sum_{i = 0}^{n-1}\frac{\hat\theta_n}{\hat\theta_n+i} \geq \hat\theta_n\int_0^{n-1}\frac{1}{\hat\theta_n+x}\, d x = \hat\theta_n \log \left(1+\frac{n-1}{\hat\theta_n} \right),
\]
which implies that
\[
\frac{K_n}{n} > \frac{\hat\theta_n}{n} \log \left(1+\frac{n}{\hat\theta_n}\right) + O\left( 1/n\right).
\]

Assume by contradiction that \eqref{eq:condition1} does not hold for a fixed $d>1/(1-\gamma)$. Since the function $h(x) = x^{-1}\log(1+x)$ is decreasing in $x$, by the previous display and assumption $(A)$ we have that
\[
\frac{C_n}{C} \geq \frac{d}{\log(n)}\log\left( 1+\frac{n^{1-\gamma}\log(n)}{dC\,\ell(n)}\right)+o_p\left(1\right).
\]
By Lemma \ref{lm:slowly}, and since \(\log C/\log n\to0\) in probability, the right-hand side converges in probability to \(d(1-\gamma)\), which contradicts \(C_n/C\to1\) in probability. Then \eqref{eq:condition1} follows with $d > 1/(1-\gamma)$.

Finally, notice that
\[
\begin{aligned}
\frac{1}{K_n}\frac{\ell_n(0, \hat\theta_n)}{\partial \alpha} &= \frac{1}{\hat\theta_n K_n}\sum_{i = 0}^{K_n-1}i-\sum_{r= 2}^\infty \frac{M_{r,n}}{K_n}\sum_{i = 1}^{r-1}\frac{1}{i} \geq \frac{K_n-1}{2\hat\theta_n}-\sum_{r= 2}^\infty \frac{M_{r,n}}{K_n}\log(r).
\end{aligned}
\]
By \eqref{eq:condition1} we have that
\[
\frac{1}{K_n}\frac{\ell_n(0, \hat\theta_n)}{\partial \alpha} \geq  \frac{C_n}{2d}\log(n)-\sum_{r= 2}^\infty p_r\log(r) + o_p(1).
\]
The second term is \(O_p(1)\) by Assumption \((B)\), while \(C_n/C\to1\) in probability. Hence $\frac{\ell_n(0, \hat\theta_n)}{\partial \alpha} > 0$ with probability tending to one and therefore $(\hat \alpha_n, \hat \theta_n)$ must belong to the interior of $\mathcal{D}$.
\end{proof}

Define now
\begin{equation}\label{eq:definition_Gn}
G_n(\alpha, \theta): = \frac{1}{K_n}\frac{\ell_n(\alpha, \theta)}{\partial \alpha}=\frac{1}{K_n}\sum_{i = 0}^{K_n-1}\frac{i}{\theta+i\alpha}-\sum_{r= 2}^\infty \frac{M_{r,n}}{K_n}\sum_{i = 1}^{r-1}\frac{1}{i-\alpha}.
\end{equation}
Notice that \eqref{eq:condition3} corresponds to $G_n(\hat\alpha_n, \hat \theta_n) = 0$. The next lemma proves convergence of $\hat \alpha_n$ and its proof is inspired by the one of Lemma $5.10$ in \cite{van2000asymptotic}.
\begin{lemma}\label{lm:convergence_alpha}
Let $(A)-(B)$ hold and let $G$ and $G_n$ be as in \eqref{eq:definition_G} and \eqref{eq:definition_Gn}, respectively. Then we have that
\begin{enumerate}
\item  $\alpha \, \to \, G_n(\alpha, \hat \theta_n)$ is a continuous and strictly decreasing function on $(0,1)$ with
\[
\lim_{\alpha\downarrow 0}G_n(\alpha, \hat \theta_n) > 0, \quad \lim_{\alpha\uparrow 1}G_n(\alpha, \hat \theta_n) < 0,
\]
for $n$ large enough.

\item $G_n(\alpha, \hat \theta_n) \xrightarrow[]{\mathbb P} G(\alpha)$ for every $\alpha \in (0,1)$.

\item $\hat \alpha_n \xrightarrow[]{\mathbb P} \alpha^*$.
\end{enumerate}
\end{lemma}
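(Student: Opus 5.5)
We prove the three items in order, splitting $G_n(\alpha,\hat\theta_n)$ into a ``block'' part
\[
A_n(\alpha)=\frac{1}{K_n}\sum_{i=0}^{K_n-1}\frac{i}{\hat\theta_n+i\alpha}
\]
and a ``spectrum'' part
\[
B_n(\alpha)=\sum_{r\geq2}\frac{M_{r,n}}{K_n}\sum_{m=1}^{r-1}\frac{1}{m-\alpha}.
\]
Throughout we work on the event of Lemma \ref{lm:interior}, whose probability tends to one, so that $\hat\theta_n>-\hat\alpha_n$ and \eqref{eq:condition1} holds.

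\textbf{Item 1.} Continuity in $\alpha$ is clear term by term. The sum defining $B_n$ is finite for each $n$, since $M_{r,n}=0$ for $r>n$. Differentiating gives
\[
\partial_\alpha A_n=-\frac{1}{K_n}\sum_{i}\frac{i^2}{(\hat\theta_n+i\alpha)^2}<0
\quad\text{and}\quad
\partial_\alpha B_n=\sum_{r}\frac{M_{r,n}}{K_n}\sum_{m}\frac{1}{(m-\alpha)^2}\ge0,
\]
so $G_n(\cdot,\hat\theta_n)$ is strictly decreasing. The function is evaluated at fixed $\hat\theta_n$, which may be negative, so the range is $\alpha>-\hat\theta_n$; I will treat $\alpha$ ranging over a compact subset of $(0,1)$ together with the limits. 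As $\alpha\uparrow1$, $B_n\to+\infty$ as soon as $M_{r,n}>0$ for some $r\geq2$. This happens with probability tending to one because $\sum_{r\ge2}p_r>0$ (here I use $p_1<1$, as in Lemma \ref{lem:alpha_star_unique}). Meanwhile $A_n$ stays bounded, so the limit at $1$ is negative. At $\alpha\downarrow0$, the value equals $\hat\theta_n^{-1}(K_n-1)/2-B_n(0)$, and the last display in the proof of Lemma \ref{lm:interior} already shows this is positive with probability tending to one (when $\hat\theta_n\le0$ the limit is $+\infty$ directly).

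\textbf{Item 2.} Assumption (B) handles the spectrum part. For fixed $\alpha$, $\sum_{m<r}(m-\alpha)^{-1}\le c_\alpha+\log r$, and therefore
\[
|B_n(\alpha)-B(\alpha)|\le c_\alpha\sum_r|M_{r,n}/K_n-p_r|+\sum_{r\geq2}|M_{r,n}/K_n-p_r|\log r\xrightarrow[]{\mathbb P}0.
\]
For the first sum, pointwise convergence of the probability vectors implies $\ell^1$ convergence by Scheffé. The second sum is controlled by (B), even after dividing out the $\log n$. For the block part, note that
\[
\frac{i}{\hat\theta_n+i\alpha}=\frac1\alpha-\frac{\hat\theta_n}{\alpha(\hat\theta_n+i\alpha)}.
\]
It therefore suffices to show that $\frac{\hat\theta_n}{K_n}\sum_{i<K_n}(\hat\theta_n+i\alpha)^{-1}\to0$. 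By Lemma \ref{lm:technical_integral} this quantity is at most
\[
\frac{\hat\theta_n}{K_n}\Big[\frac1{\hat\theta_n}+\frac1\alpha\log\Big(1+\frac{\alpha K_n}{\hat\theta_n}\Big)\Big].
\]
Setting $x=\alpha K_n/\hat\theta_n$, this is $O(1/K_n)+\alpha^{-1}\cdot\alpha\, x^{-1}\log(1+x)$. By \eqref{eq:condition1} and (A), $x\ge \alpha C_n\log n/(dC)\to\infty$ in probability, so the bound tends to zero.

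The case of negative or bounded $\hat\theta_n$ needs separate care, since $\hat\theta_n$ may be close to $-\alpha$ only for $\alpha$ near $\hat\alpha_n$. For fixed $\alpha$, $\hat\theta_n+\alpha>\alpha-\hat\alpha_n$ could fail. I will handle this by bounding $|\hat\theta_n|\le1$ in that case. The $i=0$ term contributes $0$, and the terms with $i\ge1$ have denominators $\hat\theta_n+i\alpha\ge i\alpha-1$, which is bounded below for $i\ge 2/\alpha$. The finitely many remaining terms contribute $O(1/K_n)$, provided the denominators are nonzero. This is the delicate point. I expect to restrict to $\alpha$ with $\alpha>-\hat\theta_n$, which holds eventually on a neighborhood of any fixed $\alpha$ bigger than $\liminf\hat\alpha_n$. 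If needed, I will argue only for $\alpha$ in such neighborhoods, which suffices for item 3.

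\textbf{Item 3.} This follows the monotone-estimating-equation argument of Lemma 5.10 in \cite{van2000asymptotic}. Fix $\epsilon>0$. By Lemma \ref{lem:alpha_star_unique}, $G(\alpha^*-\epsilon)>0>G(\alpha^*+\epsilon)$. By item 2, with probability tending to one we have $G_n(\alpha^*-\epsilon,\hat\theta_n)>0>G_n(\alpha^*+\epsilon,\hat\theta_n)$. Since $G_n(\hat\alpha_n,\hat\theta_n)=0$ by \eqref{eq:condition3} and $G_n(\cdot,\hat\theta_n)$ is strictly decreasing by item 1, it follows that $\hat\alpha_n\in(\alpha^*-\epsilon,\alpha^*+\epsilon)$.

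The main obstacle is the uniformity issue in item 2: $\hat\theta_n$ is random and depends on the same data, and it may sit near $-\alpha$. The bound \eqref{eq:condition1}, which ensures $K_n/\hat\theta_n\gtrsim\log n$, together with a separate treatment of bounded $\hat\theta_n$, is what makes the block term converge to $1/\alpha$ regardless of the size of $\hat\theta_n$.
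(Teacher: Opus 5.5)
For $\hat\theta_n>0$ your argument is the paper's. It uses the same split of $G_n$ and the identity $i/(\hat\theta_n+i\alpha)=1/\alpha-\hat\theta_n/\{\alpha(\hat\theta_n+i\alpha)\}$, combined with Lemma~\ref{lm:technical_integral} and \eqref{eq:condition1}. It controls the spectrum part with (B) and proves item~3 by the monotone estimating-equation argument.

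\textbf{The gap: the case $-\hat\alpha_n<\hat\theta_n\le 0$.} You flag this case correctly, but your patch does not close it.
\begin{itemize}
\item A term $K_n^{-1}\,i/(\hat\theta_n+i\alpha)$ with small $i$ is negligible only if $\hat\theta_n+i\alpha\gg 1/K_n$. ``Nonzero denominators'' gives no such control for a data-dependent $\hat\theta_n$.
\item The case is not vacuous. In the regime $\alpha^*>\gamma$ of Theorem~\ref{thm:mle}, one has $\hat\theta_n\to-\alpha^*$ and $\hat\theta_n+\hat\alpha_n\to0$.
\item Restricting to $\alpha$ above $\liminf\hat\alpha_n$ is circular, since locating $\hat\alpha_n$ is what you are proving.
\item It also leaves open the event $\{\hat\alpha_n\ge\alpha^*+\epsilon,\ \hat\theta_n\le-(\alpha^*+\epsilon)\}$. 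On that event, $\alpha^*+\epsilon$ is not even in the domain $(-\hat\theta_n,1)$ of $G_n(\cdot,\hat\theta_n)$.
\item The $\alpha$-score equation alone cannot exclude that event. For any $\hat\theta_n\in(-1,0)$, $G_n(\cdot,\hat\theta_n)\to+\infty$ as $\alpha\downarrow-\hat\theta_n$, so it has a zero in $(-\hat\theta_n,1)$ whatever the data.
\end{itemize}
What is missing is the $\theta$-score equation. The paper's own proof does not help you here: it silently assumes $\hat\theta_n>0$, since it divides by $\hat\theta_n$ and uses $\log\{1+\alpha(K_n-1)/\hat\theta_n\}$.

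\textbf{One way to close it on $\{\hat\theta_n\le0\}$} is to use one-sided bounds, without evaluating item~2 at a random $\hat\theta_n$.

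Lower side. Either $\alpha^*-\epsilon\le-\hat\theta_n$, in which case $\hat\alpha_n>-\hat\theta_n\ge\alpha^*-\epsilon$ because $(\hat\alpha_n,\hat\theta_n)\in\mathcal{D}$. Or $\alpha^*-\epsilon>-\hat\theta_n$, and then $\hat\theta_n\le0$ gives $i/(\hat\theta_n+i\alpha)\ge1/\alpha$. Hence
\[
G_n(\alpha^*-\epsilon,\hat\theta_n)\ge(1-K_n^{-1})/(\alpha^*-\epsilon)-B_n(\alpha^*-\epsilon)\to G(\alpha^*-\epsilon)>0,
\]
a bound free of $\hat\theta_n$. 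Monotonicity of $G_n(\cdot,\hat\theta_n)$ on $(-\hat\theta_n,1)$ then gives $\hat\alpha_n>\alpha^*-\epsilon$.

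Upper side. Set \eqref{eq:first_derivative_theta} to zero at the MLE.
\begin{itemize}
\item Its first sum is at least $(\hat\theta_n+\hat\alpha_n)^{-1}$.
\item Its second sum is at most $(\hat\theta_n+1)^{-1}+1+\log n$, since $\hat\theta_n>-1$.
\item Because $0<\hat\theta_n+1\le1$, this yields $\hat\theta_n+\hat\alpha_n\ge(1-\hat\alpha_n)/(1+\log n)$.
\end{itemize}
Using $\hat\theta_n+i\hat\alpha_n>(i-1)\hat\alpha_n$ for $i\ge2$, you get
\[
A_n(\hat\alpha_n)\le\frac{1+\log n}{K_n(1-\hat\alpha_n)}+\frac{1}{\hat\alpha_n}\Big(1+\frac{1+\log K_n}{K_n}\Big).
\]
Compare this with $A_n(\hat\alpha_n)=B_n(\hat\alpha_n)\ge(1-\hat\alpha_n)^{-1}\sum_{r\ge2}M_{r,n}/K_n$. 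Since $p_1<1$, this keeps $\hat\alpha_n$ bounded away from $1$ with probability tending to one. Then $0=G_n(\hat\alpha_n,\hat\theta_n)\le1/\hat\alpha_n-B_n(\hat\alpha_n)+o_p(1)$. The map $\alpha\mapsto1/\alpha-B_n(\alpha)$ is $\theta$-free, decreasing, and converges pointwise to $G$. So $\hat\alpha_n\ge\alpha^*+\epsilon$ would force $0\le G(\alpha^*+\epsilon)+o_p(1)$, which has vanishing probability.
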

\begin{proof}
As regards point $1.$, it is immediate to show that $G_n(\alpha, \hat \theta_n)$ is continuous and decreasing at every $\alpha \in (0,1)$. Moreover
\[
G_n(\alpha, \hat \theta_n) \leq \frac{1}{\alpha}-\frac{1}{\alpha}\sum_{r \geq 2}\frac{M_{r,n}}{K_n},
\]
which implies $\lim_{\alpha\uparrow 1}G_n(\alpha, \hat \theta_n) = -\infty$. Finally, notice that
\[
\begin{aligned}
G_n(0, \hat \theta_n) = \frac{1}{\hat\theta_n K_n}\sum_{i = 0}^{K_n-1}i-\sum_{r= 2}^\infty \frac{M_{r,n}}{K_n}\sum_{i = 1}^{r-1}\frac{1}{i} \geq \frac{K_n-1}{2\hat\theta_n}-\sum_{r= 2}^\infty \frac{M_{r,n}}{K_n}\log(r),
\end{aligned}
\]
and by \eqref{eq:condition1} we have that
\[
G_n(0, \hat \theta_n) \geq  \frac{c}{2d}\log(n)-\sum_{r= 2}^\infty p_r\log(r) + o_p(1),
\]
which by Assumption $(B)$ implies that $G_n(0, \hat \theta_n) > 0$ for $n$ large enough.

As regards point $2.$, the result will be a consequence of proving
\begin{equation}\label{eq:lemma_toshow1}
 \left\lvert\frac{1}{K_n}\sum_{i = 0}^{K_n-1}\frac{i}{\hat\theta_n+i\alpha}-\frac{1}{\alpha}\right\rvert = o_p(1)
\end{equation}
and
\begin{equation}\label{eq:lemma_toshow2}
\left\lvert \sum_{r= 2}^\infty \frac{M_{r,n}}{K_n}\sum_{i = 1}^{r-1}\frac{1}{i-\alpha}-\sum_{r= 2}^\infty p_r\sum_{i = 1}^{r-1}\frac{1}{i-\alpha}\right\rvert = o_p(1).
\end{equation}
We start from \eqref{eq:lemma_toshow1}, noticing that
\[
\frac{1}{K_n}\sum_{i = 0}^{K_n-1}\frac{i}{\hat\theta_n+i\alpha}= \frac{1}{\alpha}\left(1-\frac{\hat\theta_n}{K_n}\sum_{i = 0}^{K_n-1}\frac{1}{\hat\theta_n+i\alpha} \right)
\]
and
\[
\begin{aligned}
\sum_{i = 0}^{K_n-1}\frac{1}{\hat\theta_n+i\alpha} \leq  \frac{1}{\alpha}\log \left(1+\frac{\alpha(K_n-1)}{\hat\theta_n} \right) +\frac{1}{\hat\theta_n},
\end{aligned}
\]
by Lemma \ref{lm:technical_integral}. Combining the above two formulas we obtain that
\begin{equation}\label{eq:inequality1}
\begin{aligned}
\left\lvert\frac{1}{K_n}\sum_{i = 0}^{K_n-1}\frac{i}{\hat\theta_n+i\alpha}-\frac{1}{\alpha}\right\rvert & \leq \frac{1}{\alpha^2}\frac{\hat\theta_n}{K_n}\log \left(1+\frac{\alpha(K_n-1)}{\hat\theta_n} \right) +\frac{1}{\alpha K_n} \xrightarrow[]{\mathbb P} 0,
\end{aligned}
\end{equation}
by Assumption $(A)$ and \eqref{eq:condition1}. As regards \eqref{eq:lemma_toshow2}, first notice that
\[
\sum_{i=1}^{r-1}\frac{1}{i-\alpha}\le \frac{1}{1-\alpha}+\sum_{i=2}^{r-1}\frac{1}{i-1}\le \frac{1}{1-\alpha}+1+\log r,
\]
for every $r \geq 2$. Therefore
\begin{equation}\label{eq:inequality2}
\left\lvert \sum_{r= 2}^\infty \frac{M_{r,n}}{K_n}\sum_{i = 1}^{r-1}\frac{1}{i-\alpha}-\sum_{r= 2}^\infty p_r\sum_{i = 1}^{r-1}\frac{1}{i-\alpha}\right\rvert\le \sum_{r\ge2}\left|\frac{M_{r,n}}{K_n}-p_r\right|\left(\frac{1}{1-\alpha}+1+\log r\right).
\end{equation}
By assumption $(A2)$, for an arbitrary $\eta> 0$ there exists $R$ such that
\[
\sum_{r\ge R}\left|\frac{M_{r,n}}{K_n}-p_r\right|\left(\frac{1}{1-\alpha}+1+\log r\right) < \eta
\]
in probability for $n$ large enough. Therefore for such $n$
\[
\left\lvert \sum_{r= 2}^\infty \frac{M_{r,n}}{K_n}\sum_{i = 1}^{r-1}\frac{1}{i-\alpha}-\sum_{r= 2}^\infty p_r\sum_{i = 1}^{r-1}\frac{1}{i-\alpha}\right\rvert\le \sum_{r = 2}^R\left|\frac{M_{r,n}}{K_n}-p_r\right|\left(\frac{1}{1-\alpha}+1+\log r\right)+\eta,
\]
and  \eqref{eq:lemma_toshow2} follows again by $(A2)$, since $\eta$ is arbitrary. 

As regards point $3.$, notice that from point $1.$ we can deduce that $\hat \alpha_n$ is the unique zero of $G_n(\alpha, \hat \theta_n)$. Since $G_n$ is continuous we have that
\[
\mathbb{P}\left(\hat \alpha_n \in (\alpha^*-\epsilon, \alpha^* +\epsilon) \right) \geq \mathbb{P}\left(G_n(\alpha^*-\epsilon, \hat \theta_n) > 0, G_n(\alpha^*+\epsilon, \hat \theta_n) < 0 \right)
\]
for every small $\epsilon > 0$ and the right hand side goes to $1$, since $G_n(\alpha^*-\epsilon, \hat \theta_n) \to G(\alpha^*-\epsilon)$ and $G_n(\alpha^*+\epsilon, \hat \theta_n) \to G(\alpha^*+\epsilon)$ by point $2.$
\end{proof}



\begin{corollary}\label{cor:convergence_rate}
Let $(A)-(B)$ hold. Then we have that
\begin{enumerate}
  \item for every $\delta > 0$ with probability $1 + o_p(1)$ we have that 
 $\hat \theta_n \leq n^{\kappa + \delta}$, 
where
\[
\kappa = 
\begin{cases}
   \frac{\gamma - \alpha^*}{1-\alpha^*} \quad \text{if } \alpha^* \leq \gamma \\
   &\\
   0 \quad \text{else} 
\end{cases}
\]

\item $\log (n) \left(\hat \alpha_n-\alpha^*\right) \xrightarrow[]{\mathbb P} 0$.
\end{enumerate}
\end{corollary}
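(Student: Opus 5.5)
Point 1 comes from the stationarity condition \eqref{eq:condition2} combined with $\hat\alpha_n\xrightarrow[]{\mathbb P}\alpha^*$. Point 2 then follows by feeding the improved bound on $\hat\theta_n$ back into the analysis of $G_n$, this time with an explicit rate.

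\emph{Point 1.} The first sum in \eqref{eq:condition2} is bounded below by Lemma \ref{lm:technical_integral}:
\[
\sum_{i=0}^{n-1}\frac{\hat\theta_n}{\hat\theta_n+i}\ge \hat\theta_n\log\Bigl(1+\frac{n-1}{\hat\theta_n}\Bigr).
\]
Each summand of the second sum is at most $1$, so the second sum is at most $K_n$. This is too weak, so I rewrite it exactly as
\[
\frac{1}{\hat\alpha_n}\Bigl(K_n-\hat\theta_n\sum_{i<K_n}(\hat\theta_n+i\hat\alpha_n)^{-1}\Bigr).
\]
Substituting into \eqref{eq:condition2}, multiplying by $\hat\alpha_n$ and applying Lemma \ref{lm:technical_integral} to both harmonic-type sums gives, up to $O(1)$ errors,
\[
(1-\hat\alpha_n)K_n \;\approx\; \hat\theta_n\Bigl[\log\bigl(1+\alpha(K_n-1)/\hat\theta_n\bigr)-\hat\alpha_n\log\bigl(1+(n-1)/\hat\theta_n\bigr)\Bigr]\cdot(\text{sign adjusted}).
\]
More precisely the relation reads
\[
\hat\theta_n\bigl[\hat\alpha_n L_n - \Lambda_n\bigr]\approx -(1-\hat\alpha_n)K_n,\qquad L_n=\log(1+n/\hat\theta_n),\ \Lambda_n=\log(1+\hat\alpha_nK_n/\hat\theta_n).
\]
Suppose $\hat\theta_n> n^{\kappa+\delta}$ on an event of non-vanishing probability. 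Since $\hat\theta_n\le n^{\gamma}\ell(n)/\log n$ by \eqref{eq:condition1}, the ratio $K_n/\hat\theta_n\to\infty$. Lemma \ref{lm:slowly} then gives
\[
L_n/\log n\to 1-\lim\log\hat\theta_n/\log n, \qquad \Lambda_n/\log n\to \gamma-\lim\log\hat\theta_n/\log n.
\]
Write $\hat\theta_n=n^{\tau_n}$ with $\tau_n\ge\kappa+\delta$. The bracket behaves like $\log n\,[\hat\alpha_n(1-\tau_n)-(\gamma-\tau_n)]$, and this quantity is at least $c\,\delta\log n>0$ because $\hat\alpha_n\to\alpha^*$ and $\tau\mapsto\alpha^*(1-\tau)-\gamma+\tau$ is increasing with root at $\kappa$. (When $\alpha^*>\gamma$ the root is negative, so we take $\kappa=0$.) The left-hand side is then positive, while the right-hand side is negative, which is a contradiction. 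The main technical care is handling the $O(1)$ remainder terms and the slowly varying factors uniformly in $\tau_n$. I would pass to a subsequence along which $\tau_n$ converges, to avoid uniformity issues.

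\emph{Point 2.} Since $G_n(\hat\alpha_n,\hat\theta_n)=0=G(\alpha^*)$, the mean value theorem gives
\[
G_n(\alpha^*,\hat\theta_n)=-\partial_\alpha G_n(\tilde\alpha_n,\hat\theta_n)(\hat\alpha_n-\alpha^*).
\]
In $-\partial_\alpha G_n$ the spectrum term contributes at least $\sum_{r\ge2}(M_{r,n}/K_n)(1-\tilde\alpha_n)^{-2}$, and this is bounded away from zero in probability since $p_1<1$. The first term contributes a nonnegative amount. It therefore suffices to show that $\log n\,|G_n(\alpha^*,\hat\theta_n)-G(\alpha^*)|\xrightarrow[]{\mathbb P}0$. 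The spectrum part is at most
\[
\sum_r|M_{r,n}/K_n-p_r|(c+\log r),
\]
and multiplied by $\log n$ this tends to zero by Assumption (B). The $\theta$-part is bounded via \eqref{eq:inequality1} by
\[
\frac{\hat\theta_n}{K_n}\log\Bigl(1+\frac{K_n}{\hat\theta_n}\Bigr)+\frac{1}{K_n}.
\]
By point 1 with $\delta$ small, $\hat\theta_n/K_n\le n^{\kappa+\delta-\gamma+o(1)}$, and $\kappa<\gamma$ because $\alpha^*>0$. Hence this bound is a negative power of $n$ times $\log n$, so it is $o(1/\log n)$.

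The main obstacle is point 1: extracting a clean sign contradiction from \eqref{eq:condition2} while controlling the remainders.
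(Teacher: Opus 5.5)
\textbf{Point 1 has a genuine error.} Your plan is the paper's: use stationarity in $\theta$, apply the two bounds of Lemma~\ref{lm:technical_integral}, and compare exponents through $\tau\mapsto\alpha^*(1-\tau)-\gamma+\tau$, which is correct. But the identity you feed into it is wrong, and your contradiction is drawn from the error. Since $\hat\alpha_n i/(\hat\theta_n+i\hat\alpha_n)=1-\hat\theta_n/(\hat\theta_n+i\hat\alpha_n)$, the second sum in \eqref{eq:condition2} equals exactly $K_n-\hat\theta_n\sum_{i<K_n}(\hat\theta_n+i\hat\alpha_n)^{-1}$, with no factor $1/\hat\alpha_n$. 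Your expression is the rewriting of $\sum_{i<K_n} i/(\hat\theta_n+i\hat\alpha_n)$, the term appearing in $G_n$. After substitution, $K_n$ cancels, and \eqref{eq:condition2} is just $\hat\theta_n$ times the score equation $\sum_{i<n}(\hat\theta_n+i)^{-1}=\sum_{i<K_n}(\hat\theta_n+i\hat\alpha_n)^{-1}$. So there is no $-(1-\hat\alpha_n)K_n$ on the right-hand side. Combined with the two-sided bounds of Lemma~\ref{lm:technical_integral}, the true identity gives $|\hat\alpha_n L_n-\Lambda_n|=O(1)$ on $\{\hat\theta_n\ge 1\}$. Your relation would instead force the bracket to be about $-(1-\hat\alpha_n)K_n/\hat\theta_n$, which tends to $-\infty$ by \eqref{eq:condition1}. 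Your relation is therefore never satisfied asymptotically, and the step ``left-hand side positive, right-hand side negative'' is not available.

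\textbf{The repair is immediate and is exactly the paper's proof.} Combine the score equation with the one-sided bounds $\sum_{i<n}(\hat\theta_n+i)^{-1}\ge\log(1+(n-1)/\hat\theta_n)$ and $\hat\alpha_n\sum_{i<K_n}(\hat\theta_n+i\hat\alpha_n)^{-1}\le \hat\alpha_n/\hat\theta_n+\log(1+\hat\alpha_n(K_n-1)/\hat\theta_n)$. This shows the bracket is at most $\hat\alpha_n/\hat\theta_n\le 1$ on $\{\hat\theta_n>n^{\kappa+\delta}\}$. Your exponent computation shows it exceeds $c\,\delta\log n$ there with probability tending to one, which is the contradiction. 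The paper writes the same inequality as $(1-\hat\alpha_n)\log\hat\theta_n\le\log K_n-\hat\alpha_n\log n+O_p(1)$.

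\textbf{Point 2 is correct and a mild variant of the paper's.} You apply the mean value theorem to $G_n(\cdot,\hat\theta_n)$ and control $G_n-G$ at the deterministic point $\alpha^*$. The price is a lower bound on the random derivative, which you supply via $\sum_{r\ge2}M_{r,n}/K_n\to 1-p_1>0$. The paper applies the mean value theorem to $G$, where $G'\le-1/\alpha^2$ comes for free. It then controls $G_n-G$ at the random point $\hat\alpha_n$, using bounds that are uniform near $\alpha^*$.

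One caveat, partly shared with the paper, concerns the regime $\alpha^*>\gamma$, where $\hat\theta_n\to-\alpha^*$. There \eqref{eq:inequality1} is not literally applicable. Moreover, your mean-value segment between $\alpha^*$ and $\hat\alpha_n$ may contain the pole $\alpha=-\hat\theta_n$ of $G_n(\cdot,\hat\theta_n)$. Working at $\hat\alpha_n$, as the paper does, avoids the pole. Then use the score equation to bound $\sum_{1\le i<K_n}(\hat\theta_n+i\hat\alpha_n)^{-1}=\sum_{1\le i<n}(\hat\theta_n+i)^{-1}=O_p(\log n)$. This makes the $\theta$-part $O_p(\log n/K_n)$ and closes that case.
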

\begin{proof}
As regards point $1.$, fix $\delta > 0$. By Lemma \ref{lm:interior} with probability going to one we have that
\[
\sum_{i = 0}^{K_n-1}\frac{1}{\hat{\theta}_n+i\hat \alpha_n}-\sum_{i = 0}^{n-1}\frac{1}{\hat\theta_n+i} = 0.
\]
Since by Lemma \ref{lm:technical_integral} we have that
\[
\sum_{i = 0}^{K_n-1}\frac{1}{\hat{\theta}_n+i\hat \alpha_n} \leq \frac{1}{\hat \theta_n} + \frac{1}{\hat \alpha_n}\log \left(1+ \frac{\hat \alpha_n}{\hat \theta_n}K_n \right)
\]
and 
\[
\sum_{i = 0}^{n-1}\frac{1}{\hat\theta_n+i} \geq \log \left(1+\frac{n-1}{\hat \theta_n} \right),
\]
we get that
\[
\frac{1}{\hat \theta_n} + \frac{1}{\hat \alpha_n}\log \left(1+ \frac{\hat \alpha_n}{\hat \theta_n}K_n \right) \geq \log \left(1+\frac{n-1}{\hat \theta_n} \right).
\]
Assume by contradiction that $\hat \theta_n > n^{\kappa + \delta}$. Then, combined with Lemma \ref{lm:interior} (point 1.), from the above formula we deduce that
\[
\log n -\log \hat \theta_n \leq \frac{1}{\hat \alpha_n} \left(\log K_n - \log \hat \theta_n \right) + O_p(1),
\]
which means
\[
(\kappa + \delta) \log n < \log \hat \theta_n \leq \frac{\log K_n -\hat \alpha_n \log n}{1-\hat \alpha_n} +O_p(1).
\]
By Assumption $(A)$ and Lemma \ref{lm:convergence_alpha} we can conclude that this is a contradiction.

As regards point $2.$, by a Taylor expansion we have that
\[
\begin{aligned}
0 &= G_n(\hat\alpha_n, \hat \theta_n) - G(\alpha^*) = G_n(\hat\alpha_n, \hat \theta_n) - G(\hat \alpha_n) + G(\hat \alpha_n) - G(\alpha^*)\\
& = G_n(\hat\alpha_n, \hat \theta_n) - G(\hat \alpha_n) + (\hat \alpha_n - \alpha^*)G'(\bar{\alpha}),
\end{aligned}
\]
with $\bar{\alpha}$ in an arbitrary small neighborhood of $\alpha^*$, with probability $1 + o_p(1)$ and $n$ large enough by Lemma \ref{lm:convergence_alpha}. Since $G'(\alpha)$ is bounded away from zero for $\alpha$ in a sufficiently small neighborhood of $\alpha*$, we deduce that
\[
\hat \alpha_n - \alpha^* = \frac{G(\hat \alpha_n)-G_n(\hat\alpha_n, \hat \theta_n)}{G'(\bar{\alpha})}
\]
with probability $1 + o_p(1)$ and we are left to prove
\[
\log n \left[G(\hat \alpha_n)-G_n(\hat\alpha_n, \hat \theta_n) \right] \xrightarrow[]{\mathbb P} 0.
\]
Similarly to Lemma \ref{lm:convergence_alpha} it suffices to show that
\begin{equation}\label{eq:cor_toshow1}
 \log n\left\lvert\frac{1}{K_n}\sum_{i = 0}^{K_n-1}\frac{i}{\hat\theta_n+i\hat \alpha_n}-\frac{1}{\hat \alpha_n}\right\rvert = o_p(1)
\end{equation}
and
\begin{equation}\label{eq:cor_toshow2}
\log n\left\lvert \sum_{r= 2}^\infty \frac{M_{r,n}}{K_n}\sum_{i = 1}^{r-1}\frac{1}{i-\hat\alpha_n}-\sum_{r= 2}^\infty p_r\sum_{i = 1}^{r-1}\frac{1}{i-\hat\alpha_n}\right\rvert = o_p(1).
\end{equation}
As regards \eqref{eq:cor_toshow1}, by \eqref{eq:inequality1} we have that
\[
\log n\left\lvert\frac{1}{K_n}\sum_{i = 0}^{K_n-1}\frac{i}{\hat\theta_n+i\alpha}-\frac{1}{\alpha}\right\rvert  \leq \frac{\log n}{\hat \alpha_n^2}\frac{\hat\theta_n}{K_n}\log \left(1+\frac{\hat \alpha_n(K_n-1)}{\hat\theta_n} \right) +\frac{\log n}{\hat \alpha_n K_n} \xrightarrow[]{\mathbb P} 0,
\]
combining Assumption $(A)$ with point $1.$ and $\hat \alpha_n \xrightarrow[]{\mathbb P} \alpha^*$. As regards \eqref{eq:cor_toshow2}, by \eqref{eq:inequality2} we have that
\[
\log n \left\lvert \sum_{r= 2}^\infty \frac{M_{r,n}}{K_n}\sum_{i = 1}^{r-1}\frac{1}{i-\alpha}-\sum_{r= 2}^\infty p_r\sum_{i = 1}^{r-1}\frac{1}{i-\alpha}\right\rvert\le \log n\sum_{r\ge2}\left|\frac{M_{r,n}}{K_n}-p_r\right|\left(\frac{1}{1-\hat\alpha_n}+1+\log r\right) \xrightarrow[]{\mathbb P} 0,
\]
by Assumption (B).
\end{proof}

We can finally prove Theorem \ref{thm:mle}.
\begin{proof}[of Theorem \ref{thm:mle}]
By Lemma \ref{lm:convergence_alpha} we directly have that $\hat \alpha_n \xrightarrow[]{\mathbb P} \alpha^*$.

As regards the remaining part of the result, since $(\hat \alpha_n, \hat \theta_n)$ must belong to the interior of $\mathcal{D}$ by Lemma \ref{lm:interior}, it must satisfy
\[
\frac{\partial \ell_n(\hat \alpha_n, \hat \theta_n)}{\partial \theta} = \sum_{i = 1}^{K_n-1}\frac{1}{\hat\theta_n+i\hat \alpha_n}-\sum_{i = 1}^{n-1}\frac{1}{\hat \theta_n+i} = 0.
\]
We know look at the four cases separately.

As regards point $1.$, notice that by Lemma \ref{lm:technical_integral} we have that
\begin{equation}\label{eq:lower_bound_derivative}
\frac{\partial \ell_n(\alpha, \theta)}{\partial \theta} \geq \frac{1}{\alpha}\log \left(1+\frac{\alpha}{\theta+\alpha}(K_n-2) \right)-\log \left(1 + \frac{n-2}{\theta + 1} \right)-\frac{1}{\theta + 1}.
\end{equation}
For every fixed $\theta$, by Assumption $(A)$ and Corollary \ref{cor:convergence_rate} we have that
\[
\frac{\partial \ell_n(\hat \alpha_n, \theta)}{\partial \theta} \geq \left(\frac{\gamma}{\alpha^*}-1\right) \log(n) + \frac{\log\ell(n)}{\hat \alpha_n} + O_p(1),
\]
which diverges since by hypothesis $\alpha^* < \gamma$. Thus $\hat \theta_n \xrightarrow[]{\mathbb P} +\infty$, which by Corollary \ref{cor:convergence_rate} (point 2.) implies that
\begin{equation}\label{eq:used_again}
\frac{\partial \ell_n(\hat \alpha_n, \hat \theta_n)}{\partial \theta} = \frac{1}{\alpha^*}\log \left(1+\frac{\alpha^*}{\hat\theta_n+\alpha^*}(K_n-2) \right)-\log \left(1+\frac{n-2}{\hat\theta_n+1} \right) + o_p(1).
\end{equation}
Since by Lemma \ref{lm:interior} and in particular \eqref{eq:condition1} we have that $K_n/\hat\theta_n \xrightarrow[]{\mathbb P} \infty$, we can also deduce that
\[
\begin{aligned}
\frac{\partial \ell_n(\hat \alpha_n, \hat \theta_n)}{\partial \theta} &= \frac{1}{\alpha^*}\log \left(\frac{\alpha^*}{\hat \theta_n}K_n \right)-\log \left(\frac{n}{\hat\theta_n} \right) + o_p(1),
\end{aligned}
\]
which is equivalent to
\[
    (1 - \alpha^*) \log \hat \theta_n = \log(\alpha^* K_n) - \alpha^* \log n + o_P(1).
\]
By assumption $(A)$ this entails
\[
    (1 - \alpha^*) \log \hat \theta_n = \log(\alpha^* C_n) + \log \ell(n) + (\gamma -\alpha^*) \log n + o_P(1),
\]
where $C_n = K_n / (n^\gamma \ell(n))$. An application of Slutsky's theorem leads to
\[
    \hat \theta_n = (\alpha^* C \ell(n))^{\frac{1}{1 - \alpha^*} }\, n^{\frac{\gamma - \alpha^*}{1-\alpha^*}}(1+ o_p(1)).
\]


As regards point $2.$, for every fixed $\theta$, by \eqref{eq:lower_bound_derivative} and Corollary \ref{cor:convergence_rate} we have that
\[
\frac{\partial \ell_n(\hat \alpha_n, \theta)}{\partial \theta} \geq \left(\frac{\gamma}{\alpha^*}-1\right)\log(n) + \frac{\log \ell(n)}{\alpha^*} + O_p(1) = \frac{\log \ell(n)}{\alpha^*} + O_p(1),
\]
since $\alpha^* = \gamma$. If $\ell(n) \to \infty$, the right-hand side diverges and $\hat\theta_n \xrightarrow[]{\mathbb P} +\infty$, so that \eqref{eq:used_again} holds. The same calculation as in point $1.$ with $\alpha^* = \gamma$ then gives $\hat \theta_n = [\gamma C\ell(n)]^{\frac{1}{1-\gamma}}(1+o_p(1))$. If instead $\ell$ is bounded, then assume by contradiction that $\hat \theta_n \xrightarrow[]{\mathbb P} \infty$. Then \eqref{eq:used_again} holds and we can similarly deduce $\hat \theta_n = \gamma + o_p(1)$, which is a contradiction. This implies that $\hat\theta_n = O_p(1) = O_p\!\left(\ell(n)^{\frac{1}{1-\gamma}}\right)$, as desired.

As regards point $3.$, notice that by Lemma \ref{lm:technical_integral} we have that
\[
\frac{\partial \ell_n(\alpha, \theta)}{\partial \theta} \leq \frac{1}{\alpha}\log \left(1+\frac{\alpha}{\theta+\alpha}(K_n-2) \right)-\log \left(1 + \frac{n-2}{\theta + 1} \right)+\frac{1}{\theta + \alpha}.
\]
Fix now $\theta > \epsilon -\alpha^*$, with $\epsilon > 0$. Then with similar calculations as before we get that
\[
\begin{aligned}
\frac{\partial \ell_n(\hat \alpha_n, \theta)}{\partial \theta} &\leq \frac{1}{\alpha^*}\log \left(K_n \right)-\log \left(n \right) + O_p(1)\\
& = \left( \frac{\gamma}{\alpha^*}-1\right)\log \left(n \right)+\log \ell(n)+ O_p(1) \to - \infty,
\end{aligned}
\]
as $n \to \infty$, since $\alpha^* > \gamma$ by assumption. Therefore $\hat \theta_n \xrightarrow[]{\mathbb P} -\alpha^*$ in probability, since $\epsilon$ is arbitrary.
\end{proof}

\section{Proof of Theorem \ref{thm:scaled-pyp}}

The proof of the convergence in probability of $K_{n}$ in \eqref{thm:scaled-pyp} relies on the explicit expressions for the moments of $K_{n}$ and their asymptotic expansions. In particular, from the distribution of $K_{n}$ \citep[Equation 3.11]{pitman2006combinatorial}, a direct calculation leads to 
\begin{equation}\label{exp_exact}
\E[K_{n}]=\frac{\lambda n^{\beta}}{\alpha}\left(-1+\frac{\Gamma(\lambda n^{\beta}+n+\alpha)}{\Gamma(\lambda n^{\beta}+\alpha)}\frac{\Gamma(\lambda n^{\beta})}{\Gamma(\lambda n^{\beta}+n)}\right);
\end{equation}
see also \citet[Theorem 2.1]{Ber24}. Recall that, for $a,b>0$, as $z\rightarrow+\infty$
\begin{equation}\label{asym_gamma}
\frac{\Gamma(z+a)}{\Gamma(z+b) } = z^{a -b}\cdot  \left[1+ \frac{(a - b)(a+b-1)}{2z} + O\left(\frac{1}{|z|^{2}}\right) \right]
\end{equation}
\citep[Equation 1]{Erd51}. Applying \eqref{asym_gamma} to the ratio of Gamma functions in \eqref{exp_exact}, as $n\rightarrow+\infty$
\begin{itemize}
\item[i)]
\begin{equation}\label{expan_ration1}
\frac{\Gamma(\lambda n^{\beta}+n+\alpha)}{\Gamma(\lambda n^{\beta}+n)}=n^{\alpha}\left(1+O\left(\frac{1}{n}\right)\right)
\end{equation}
\item[ii)]
\begin{equation}\label{expan_ration2}
\frac{\Gamma(\lambda n^{\beta})}{\Gamma(\lambda n^{\beta}+\alpha)}=\lambda^{-\alpha}n^{-\beta\alpha}\left(1+O\left(\frac{1}{n^{\beta}}\right)\right).
\end{equation}
\end{itemize}
Therefore, by combining \eqref{exp_exact} with with the asymptotic expansions \eqref{expan_ration1}-\eqref{expan_ration2}, as $n\rightarrow+\infty$
\begin{equation}\label{final_expec_approx}
\E[K_{n}]=-\frac{\lambda n^{\beta}}{\alpha}+\frac{\lambda n^{\beta}}{\alpha}\lambda^{-\alpha}n^{\alpha-\beta\alpha}\left(1+O\left(\frac{1}{n^{\beta}}\right)\right)
\end{equation}
such that,
\begin{displaymath}
\lim_{n\rightarrow+\infty}\E\left[\frac{K_{n}}{n^{\alpha+\beta(1-\alpha)}}\right]=\frac{\lambda^{1-\alpha}}{\alpha},
\end{displaymath}
This part completes the large $n$ asymptotic behaviour of $\E[K_{n}]$, and now we consider the large $n$ asymptotic behaviour of $\text{Var}[K_{n}]$. In particular, from the distribution of $K_{n}$ \citep[Equation 3.11]{pitman2006combinatorial}, a direct calculation leads to 
\begin{align}\label{var_exact}
& \text{Var}[K_n]\\
   &\notag\quad=\frac{\lambda n^{\beta}}{\alpha}\left(\frac{\lambda n^{\beta}}{\alpha} + 1\right) \left[1 - 2 \frac{\Gamma(\lambda n^{\beta}+n+\alpha)}{\Gamma(\lambda n^{\beta}+n)} \frac{\Gamma(\lambda n^{\beta})}{\Gamma(\lambda n^{\beta}+\alpha)} + \frac{\Gamma(\lambda n^{\beta}+n+2\alpha)}{\Gamma(\lambda n^{\beta}+n)} \frac{\Gamma(\lambda n^{\beta})}{\Gamma(\lambda n^{\beta}+2\alpha)} \right]\\
   &\notag\quad\quad+ \frac{\lambda n^{\beta}}{\alpha} \left[-1 + \frac{\Gamma(\lambda n^{\beta}+n+\alpha)}{\Gamma(\lambda n^{\beta}+n)} \frac{\Gamma(\lambda n^{\beta})}{\Gamma(\lambda n^{\beta}+\alpha)} \right]\\
   &\notag\quad\quad-\left(\frac{\lambda n^{\beta}}{\alpha}\right)^2 \left[ 1+ \left(\frac{\Gamma(\lambda n^{\beta}+n+\alpha)}{\Gamma(\lambda n^{\beta}+n)} \frac{\Gamma(\lambda n^{\beta})}{\Gamma(\lambda n^{\beta}+\alpha)}\right)^2 -2 \frac{\Gamma(\lambda n^{\beta}+n+\alpha)}{\Gamma(\lambda n^{\beta}+n)}\frac{\Gamma(\lambda n^{\beta})}{\Gamma(\lambda n^{\beta}+\alpha)} \right];
\end{align}
see also \citet[Theorem 2.1]{Ber24}. Applying \eqref{asym_gamma} to the ratio of Gamma functions in \eqref{var_exact}, as $n\rightarrow+\infty$  
\begin{itemize}
\item[i)]
\begin{equation}\label{expan_ration_var1}
\frac{\Gamma(\lambda n^{\beta}+n+\alpha)}{\Gamma(\lambda n^{\beta}+n)}=n^{\alpha}\left(1+O\left(\frac{1}{n}\right)\right)
\end{equation}
\item[ii)]
\begin{equation}\label{expan_ration_var2}
\frac{\Gamma(\lambda n^{\beta})}{\Gamma(\lambda n^{\beta}+\alpha)}=\lambda^{-\alpha}n^{-\beta\alpha}\left(1+O\left(\frac{1}{n^{\beta}}\right)\right);
\end{equation}
\item[iii)]
\begin{equation}\label{expan_ration_var3}
\frac{\Gamma(\lambda n^{\beta}+n+2\alpha)}{\Gamma(\lambda n^{\beta}+n)}=n^{2\alpha}\left(1+O\left(\frac{1}{n}\right)\right)
\end{equation}
\item[iv)]
\begin{equation}\label{expan_ration_var4}
\frac{\Gamma(\lambda n^{\beta})}{\Gamma(\lambda n^{\beta}+2\alpha)}=\lambda^{-2\alpha}n^{-2\beta\alpha}\left(1+O\left(\frac{1}{n^{\beta}}\right)\right).
\end{equation}
\end{itemize}
Therefore, by combining \eqref{var_exact} with with the asymptotic expansions \eqref{expan_ration_var1}-\eqref{expan_ration_var4}, as $n\rightarrow+\infty$
\begin{equation}\label{final_var_approx}
\text{Var}[K_{n}]=\frac{\lambda^{1-2\alpha}}{\alpha}n^{\beta+2\alpha-2\beta\alpha}(1+O(n^{-\beta}))
\end{equation}
such that, 
\begin{displaymath}
\lim_{n\rightarrow+\infty}\text{Var}\left[\frac{K_{n}}{n^{\alpha+\beta(1-\alpha)}}\right]=0
\end{displaymath}
To prove the convergence in probability of $K_{n}$, write
\begin{equation}\label{decomp_new_1}
\frac{K_{n}-n^{\alpha+\beta(1-\alpha)}\left(\frac{\lambda^{1-\alpha}}{\alpha}\right)}{n^{\alpha+\beta(1-\alpha)}}=\frac{K_{n}-\E[K_{n}]}{n^{\alpha+\beta(1-\alpha)}}+\frac{\E[K_{n}]-n^{\alpha+\beta(1-\alpha)}\left(\frac{\lambda^{1-\alpha}}{\alpha}\right)}{n^{\alpha+\beta(1-\alpha)}};
\end{equation}
From \eqref{final_expec_approx}, the second term in the right-hand side of \eqref{decomp_new_1} converges to $0$ as $n\rightarrow+\infty$. Then, fixing $\varepsilon>0$, from Chebyshev inequality and \eqref{final_var_approx} as $n\rightarrow+\infty$
\begin{equation}\label{cheby_new_1}
\text{Pr}\left[\left|\frac{K_{n}-\E[K_{n}]}{n^{\alpha+\beta(1-\alpha)}}\right|>\varepsilon\right]\leq\frac{\text{Var}(K_{n})}{(n^{\alpha+\beta(1-\alpha)})^{2}\varepsilon^{2}}=O\left(\frac{1}{n^{\alpha+\beta(1-\alpha)}}\right).
\end{equation}
Hence, in view of \eqref{decomp_new_1}-\eqref{cheby_new_1},
\begin{equation}\label{convk_app}
\frac{K_n}{n^{\alpha+\beta(1-\alpha)}}\xrightarrow[]{\mathbb P}\frac{\lambda^{1-\alpha}}{\alpha},
\end{equation}
which completes the proof of the convergence in probability of $K_{n}$ in \eqref{thm:scaled-pyp}

To prove the convergence in probability of $M_{r,n}/K_{n}$ in  \eqref{thm:scaled-pyp}, we first prove the convergence in probability for $M_{r,n}$, which relies on  the explicit expressions for the moments of $M_{r,n}$ and their asymptotic expansions. In particular, from \citet[Proposition 1]{Fav13}, a direct calculation leads to 
\begin{equation}\label{exp_exact_m}
\E[M_{r,n}]=q_{\alpha}(r)\frac{\Gamma(n+1)}{\Gamma(n-r+1)}\frac{\Gamma(\lambda n^{\beta}+n+\alpha-r)}{\Gamma(\lambda n^{\beta}+n)}\frac{\Gamma(\lambda n^{\beta}+1)}{\Gamma(\lambda n^{\beta}+\alpha)},
\end{equation}
see also \citet[Theorem 3.1]{Ber24}. Recall that, for $a,b>0$, as $z\rightarrow+\infty$
\begin{equation}\label{asym_gamma_m}
\frac{\Gamma(z+a)}{\Gamma(z+b) } = z^{a -b}\cdot  \left[1+ \frac{(a - b)(a+b-1)}{2z} + O\left(\frac{1}{|z|^{2}}\right) \right]
\end{equation}
\citep[Equation 1]{Erd51}. Applying \eqref{asym_gamma_m} to the ratio of Gamma functions in \eqref{exp_exact_m}, as $n\rightarrow+\infty$
\begin{itemize}
\item[i)]
\begin{equation}\label{approx_mr_1}
\frac{\Gamma(n+1)}{\Gamma(n+1-r)}=n^{r}\left(1+O\left(\frac{1}{n}\right)\right);
\end{equation}
\item[ii)]
\begin{equation}\label{approx_mr_2}
\frac{\Gamma(\lambda n^{\beta}+n+\alpha-r)}{\Gamma(\lambda n^{\beta}+n)}=n^{\alpha-r}\left(1+O\left(\frac{1}{n}\right)\right)
\end{equation}
\item[iii)]
\begin{equation}\label{approx_mr_3}
\frac{\Gamma(\lambda n^{\beta}+1)}{\Gamma(\lambda n^{\beta}+\alpha)}=\lambda^{1-\alpha}n^{\beta(1-\alpha)}\left(1+O\left(\frac{1}{n^{\beta}}\right)\right).
\end{equation}
\end{itemize}
Therefore, by combining \eqref{exp_exact_m} with with the asymptotic expansions \eqref{approx_mr_1}-\eqref{approx_mr_3}, as $n\rightarrow+\infty$
\begin{equation}\label{final_expec_approx_mr}
\E[M_{r,n}]=q_{\alpha}(r)\lambda^{1-\alpha}n^{\alpha+\beta(1-\alpha)}\left(1+O\left(\frac{1}{n^{\beta}}\right)\right)
\end{equation}
such that,
\begin{displaymath}
\lim_{n\rightarrow+\infty}\E\left[\frac{M_{r,n}}{n^{\alpha+\beta(1-\alpha)}}\right]=q_{\alpha}(r)\lambda^{1-\alpha}
\end{displaymath}
This part completes the large $n$ asymptotic behaviour of $\E[M_{r,n}]$, and now we consider the large $n$ asymptotic behaviour of $\text{Var}[M_{r,n}]$. As for the expectation, from \citet[Proposition 1]{Fav13}, a direct calculation leads to 
\begin{align}\label{var_exact_mr}
& \text{Var}[M_{r,n}]\\
&\notag\quad=(q_{\alpha}(r))^{2}\frac{\Gamma(n+1)}{\Gamma(n-2r+1)}\left(\frac{\lambda n^{\beta}}{\alpha}\right)_{(2)}\frac{\Gamma(\lambda n^{\beta}+n+2\alpha-2r)}{\Gamma(\lambda n^{\beta}+n)}\frac{\Gamma(\lambda n^{\beta})}{\Gamma(\lambda n^{\beta}+2\alpha)}\\
&\notag\quad\quad+q_{\alpha}(r)\frac{\Gamma(n+1)}{\Gamma(n-r+1)}\frac{\Gamma(\lambda n^{\beta}+n+\alpha-r)}{\Gamma(\lambda n^{\beta}+n)}\frac{\Gamma(\lambda n^{\beta}+1)}{\Gamma(\lambda n^{\beta}+\alpha)}\\
&\notag\quad\quad-\left(q_{\alpha}(r)\frac{\Gamma(n+1)}{\Gamma(n-r+1)}\frac{\Gamma(\lambda n^{\beta}+n+\alpha-r)}{\Gamma(\lambda n^{\beta}+n)}\frac{\Gamma(\lambda n^{\beta}+1)}{\Gamma(\lambda n^{\beta}+\alpha)}\right)^{2};
\end{align}
see also \citet[Theorem 3.1]{Ber24}. Applying \eqref{asym_gamma_m} to the ratio of Gamma functions in \eqref{var_exact_mr}, as $n\rightarrow+\infty$
\begin{itemize}
\item[i)]
\begin{equation}\label{approx_mr_new_1}
\frac{\Gamma(n+1)}{\Gamma(n-2r+1)}=n^{2r}\left(1+O\left(\frac{1}{n}\right)\right);
\end{equation}
\item[ii)]
\begin{equation}\label{approx_mr_new_2}
\frac{\Gamma(\lambda n^{\beta}+n+2\alpha-2r)}{\Gamma(\lambda n^{\beta}+n)}=n^{2\alpha-2r}\left(1+O\left(\frac{1}{n}\right)\right)
\end{equation}
\item[iii)]
\begin{equation}\label{approx_mr_new_3}
\frac{\Gamma(\lambda n^{\beta})}{\Gamma(\lambda n^{\beta}+2\alpha)}=\lambda^{-2\alpha}n^{-2\alpha\beta}\left(1+O\left(\frac{1}{n^{\beta}}\right)\right);
\end{equation}
\item[iv)]
\begin{equation}\label{approx_mr__new_4}
\frac{\Gamma(n+1)}{\Gamma(n+1-r)}=n^{r}\left(1+O\left(\frac{1}{n}\right)\right);
\end{equation}
\item[v)]
\begin{equation}\label{approx_mr_new_5}
\frac{\Gamma(\lambda n^{\beta}+n+\alpha-r)}{\Gamma(\lambda n^{\beta}+n)}=n^{\alpha-r}\left(1+O\left(\frac{1}{n}\right)\right)
\end{equation}
\item[vi)]
\begin{equation}\label{approx_mr_new_6}
\frac{\Gamma(\lambda n^{\beta}+1)}{\Gamma(\lambda n^{\beta}+\alpha)}=\lambda^{1-\alpha}n^{\beta(1-\alpha)}\left(1+O\left(\frac{1}{n^{\beta}}\right)\right).
\end{equation}
\end{itemize}
Therefore, by combining \eqref{var_exact_mr} with with the asymptotic expansions \eqref{approx_mr_new_1}-\eqref{approx_mr_new_6}, as $n\rightarrow+\infty$
\begin{equation}\label{final_var_approx_mr}
\text{Var}[M_{r,n}]=(q_{\alpha}(r))^{2}
\alpha(1-\alpha)\,\lambda^{\,1-2\alpha}\,n^{\,2\alpha+\beta(1-2\alpha)}\left(1+O\!\left(\dfrac{1}{n^{\beta}}\right)\right)
\end{equation}
such that,
\begin{displaymath}
\lim_{n\rightarrow+\infty}\text{Var}\left[\frac{M_{r,n}}{n^{\alpha+\beta(1-\alpha)}}\right]=0
\end{displaymath}
To prove the convergence in probability for $M_{r,n}$, write
\begin{equation}\label{decomp_new_mr_1}
\frac{M_{r,n}-n^{\alpha+\beta(1-\alpha)}\left(q_{\alpha}(r)\lambda^{1-\alpha}\right)}{n^{\alpha+\beta(1-\alpha)}}=\frac{M_{r,n}-\E[M_{r,n}]}{n^{\alpha+\beta(1-\alpha)}}+\frac{\E[M_{r,n}]-n^{\alpha+\beta(1-\alpha)}\left(q_{\alpha}(r)\lambda^{1-\alpha}\right)}{n^{\alpha+\beta(1-\alpha)}};
\end{equation}
From \eqref{final_expec_approx_mr}, the second term in the right-hand side of \eqref{decomp_new_mr_1} converges to $0$ as $n\rightarrow+\infty$. Then, fixing $\varepsilon>0$, from Chebyshev inequality and \eqref{final_var_approx_mr} as $n\rightarrow+\infty$
\begin{equation}\label{cheby_new_mr_1}
\text{Pr}\left[\left|\frac{M_{r,n}-\E[M_{r,n}]}{n^{\alpha+\beta(1-\alpha)}}\right|>\varepsilon\right]\leq\frac{\text{Var}(M_{r,n})}{(n^{\alpha+\beta(1-\alpha)})^{2}\varepsilon^{2}}=O\left(\frac{1}{n^{\alpha+\beta(1-\alpha)}}\right);
\end{equation}
Hence, in view of \eqref{decomp_new_mr_1}-\eqref{cheby_new_mr_1},
\begin{equation}\label{convm_app}
\frac{M_{r,n}}{n^{\alpha+\beta(1-\alpha)}}\xrightarrow[]{\mathbb P}q_{\alpha}(r)\frac{\lambda^{1-\alpha}}{\alpha}
\end{equation}
Since $\lambda>0$, the limit in \eqref{convk_app} is positive. Hence, by Slutsky's theorem, \eqref{convk_app} and \eqref{convm_app} yield
\begin{displaymath}
\frac{M_{r,n}}{K_n}\xrightarrow[]{\mathbb P} q_\alpha(r)
\end{displaymath}
which completes the proof of the convergence in probability of $M_{r,n}/K_{n}$ in \eqref{thm:scaled-pyp}.

\end{document}